\numberwithin{equation}{section}
\numberwithin{figure}{section}
\numberwithin{table}{section}
\newtheoremstyle{mystyle}{}{}{\itshape}{}{}{}{}{}
\theoremstyle{plain}
\newtheorem{definition}{Definition}[section]
\newtheorem{theorem}{Theorem}[section]
\newtheorem{lemma}{Lemma}[section]
\newtheorem{proposition}{Proposition}[section]
\begin{document}
\label{page:t}
\thispagestyle{plain}

\title{SOLVABILITY OF THE MOISTURE TRANSPORT MODEL FOR POROUS MATERIALS}
\author{Akiko Morimura\footnotemark[1]}
\affiliation{Department of Mathematical and Physical Science, Graduate School of Science,\\Japan Women's University\\
2-8-1 Mejirodai, Bunkyoku, Tokyo, 112-8681, Japan}
\email{m1716096ma@ug.jwu.ac.jp}
\sauthor{Toyohiko Aiki} %\footnotemark[2]}
\saffiliation{Department of Mathematical and Physical Science, Faculty of Science,\\Japan Women's University\\
2-8-1 Mejirodai, Bunkyoku, Tokyo, 112-8681, Japan}
\semail{aikit@fc.jwu.ac.jp}
\date{}

\footcomment{
This work is supported by JSPS KAKENHI Grant Number 22K03377 and partially supported Ebara Corporation.}
\footnotetext[1]{Corresponding author}
\maketitle
\vspace{-0.5cm}
\noindent
{\bf Abstract.}
We consider an initial and boundary value problem invoked from the mathematical model for moisture transport in porous materials. Because of the difficulty appearing in the boundary condition, we have changed it and obtain the nonlinear parabolic equation with the nonlinear boundary condition in the one-dimensional interval. The main result of this paper is to prove existence and uniqueness of solutions to the problem by applying the standard fixed point theorem argument.\\

\noindent Keywords: Porous material, elliptic-parabolic equation, fixed point theorem.\\
AMS Subject Classification: 35K57, 35K55, 76S05.

\newpage
\section{Introduction}
For moisture transport in porous materials, a lot of mathematical models have been investigated. In this paper we focus on the model proposed by Fukui, Iba, Hokoi, and Ogura \cite{Fukui} whose motivation is based on \cite{Green}. Here, we introduce the model which describes the water transport with in a brick, and was proposed as a nonlinear diffusion equation on the one-dimensional interval in \cite{Fukui}. Let $\psi_w$ and $\phi$ be the water content and the porosity, respectively. Namely, the air content $\psi_a$ satisfies $\psi_w + \psi_a = \phi$. Accordingly, the masses $m_w$ and $m_a$ of water and air are given as $m_w = \rho_w \psi_w$ and $m_a = \rho_a \psi_a$, where $\rho_w$ and $\rho_a$ are densities of water and air, respectively. Under these notations, the mass conservation law for water and air implies the following two equations:
\begin{align}
\label{Fukui-eq-w}\frac{\partial m_w}{\partial t} 
&= \frac{\partial}{\partial x} \left( \lambda_p \frac{\partial P_w}{\partial x} \right) \;\; {\rm on} \; Q(T) := (0,T) \times (0,1), \\
\label{Fukui-eq-a}\frac{\partial m_a}{\partial t} 
&= \frac{\partial}{\partial x} \left( \frac{k_a}{g} \frac{\partial P_a}{\partial x} \right) \;\; {\rm on} \; Q(T),
\end{align}
where $T>0$, $x \in (0, 1)$ is the position in the material, $P_w$ and $P_a$ are pressures of water and air, respectively, and  $\lambda_p$ is the water conductivity, $k_a$ is the air permeability and $g$ indicates the gravitational acceleration. In \cite{Fukui}, the capillary pressure $P_c$ is defined by $P_c = P_a - P_w$ and assumed as $P_c = - \rho_w \mu$, where $\mu$ is the water chemical potential, and $\lambda_p$ is given by $\lambda_p = \frac{D}{\rho_w} \frac{\partial \psi_w}{\partial \mu}$, where $D$ is a function of $\psi_w$. Accordingly, we have
\begin{align}
\lambda_p \frac{\partial P_w}{\partial x} 
&= \frac{D(\psi_w)}{\rho_w} \frac{\partial \psi_w}{\partial \mu} \frac{\partial}{\partial x} \left( \rho_w \mu + P_a \right) \nonumber \\
&\label{Fukui-eq-w-lam}= D(\psi_w) \frac{\partial \psi_w}{\partial \mu} \frac{\partial}{\partial x} \left( \mu + \frac{P_a}{\rho_w} \right),
\end{align}
since $\rho_w$ is a constant. Here, we note that $\psi_w$ is defined as the function of $\mu$ in \cite{Fukui}.\\

Now, by putting $\lambda(\mu) = D(\psi_w(\mu)) \frac{\partial \psi_w}{\partial \mu}(\mu), p = \frac{P_a}{\rho_w}, \psi(\mu) = \rho_w \psi_w(\mu) = m_w$, and $u = \mu$, we obtain
\begin{align}
&\label{P-eq}\frac{\partial \psi(u)}{\partial t}  = \frac{\partial}{\partial x}\left( \lambda (u) \frac{\partial}{\partial x}(u+p) \right) \; \mathrm{in} \; Q(T).
\end{align}
In this paper, as a first step of research to the system consisting of \eqref{Fukui-eq-w} and \eqref{Fukui-eq-a}, we suppose that the function $p$ is given in $Q(T)$. In other words, we consider only \eqref{Fukui-eq-w}. We give a remark concerned with the relationship between \eqref{P-eq} and the standard porous media equation. To do so, by \eqref{Fukui-eq-w-lam} we can rewrite \eqref{P-eq} to the following form:
\begin{align}
&\label{P-eq-rw}\frac{\partial \psi(u)}{\partial t}  
= \frac{\partial}{\partial x}\left( D(\psi_w) \left( \frac{\partial \psi_w(\mu)}{\partial \mu} + \frac{\partial \psi_w}{\partial \mu} P_a \right) \right) \;\; {\rm in} \; Q(T).
\end{align}
In \cite{Fukui}, the function $D(\psi_w)$ is given by
\[
D(\psi_w) = 30.332 \times 10^{-6} \exp (79.8 \times \psi_w^{1.5}),
\]
(see Figure \ref{fig1}).
\begin{figure}[H]
\begin{minipage}[t]{0.5\linewidth}
\centering
\includegraphics[bb=0 0 560 420, width=80mm]{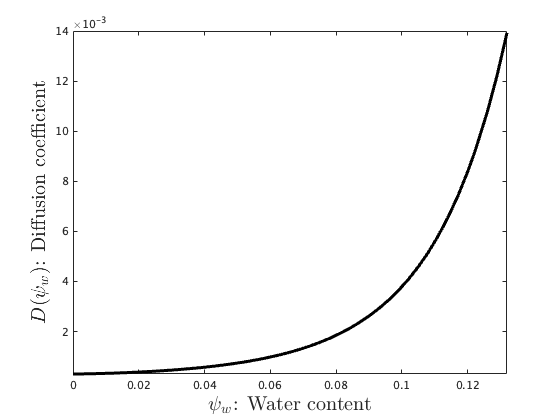} 
\caption{Graph of $D(\psi_w)$}
\label{fig1}
\end{minipage}
\begin{minipage}[t]{0.5\linewidth}
\centering
\includegraphics[bb=0 0 560 420, width=80mm]{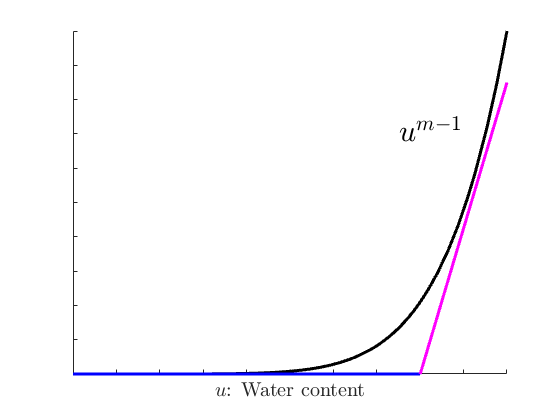} 
\caption{Typical graphs for porous media}
\label{fig2}
\end{minipage}
\end{figure}
It is well known that the water conductivity $D = D(u)$ is porous media is very small for small $u$ and increases rapidly for $u$ over some threshold as the graphs in Figure \ref{fig2}. Therefore, our equation \eqref{P-eq} can be regarded a kind of porous media equations.\\

Next, we mention our boundary condition. In \cite{Fukui}, since the water adsorption phenomenon from the top of the brick to the inside was discussed, the Robin boundary condition was imposed. In the present paper, for simplicity we assume no flux at the boundary, namely,
\begin{align*}
\lambda(u)\frac{\partial}{\partial x} (u + p) = 0 \;\; {\rm at} \; x =0,1.
\end{align*}
If $\lambda$ is strictly positive, then we have
\begin{align}
\label{P-BC-ex} \frac{\partial u}{\partial x} + \frac{\partial p}{\partial x} = 0 \;\; {\rm at} \; x =0,1.
\end{align}
The boundary condition \eqref{P-BC-ex} seems to be the Neumann type. However,  by the transformation to solve \eqref{P-eq}, \eqref{P-BC-ex} is rewritten to the non-monotone Robin type. Actually, by putting $v = \hat{\lambda}(u)$, we see that
\begin{align}
\frac{d \tilde{\lambda}(v)}{dv} \frac{\partial v}{\partial x} + \frac{\partial p}{\partial x} 
&= 0, \nonumber\\
\label{AP-BC-rw}\frac{\partial v}{\partial x} + a(v) \frac{\partial p}{\partial x} 
&= 0,
\end{align}
where $\hat{\lambda}(v)$ is the primitive of $\lambda$, $\tilde{\lambda}(v)$ is the inverse of $\hat{\lambda}(u)$ and $a(v) = \frac{1}{\frac{d \tilde{\lambda}(v)}{dv}}$ for $v \in \mathbb{R}$. Since $p$ is unknown in the original system, we can not assume the sign condition for $\frac{\partial p}{\partial x}$, that is, \eqref{AP-BC-rw} is not monotone with respect to $v$. Due to the evolution equation theory, we do not expect that nonlinear parabolic equation \eqref{P-BC-ex} with the boundary condition \eqref{AP-BC-rw} has a strong solution. Hence, in the present paper we shall show the strong solvability of the problem consisting of \eqref{P-eq} and the following modified boundary condition \eqref{P-BC}:
\begin{align}
\label{P-BC} \lambda(u) \frac{\partial u}{\partial x} + \frac{\partial p}{\partial x} = 0 \;\; {\rm at} \; x =0,1.
\end{align}
Also, we impose the initial condition,
\begin{align}
&\label{P-IC}u(0,x) = u_0(x) \; \mathrm{for} \; x \in(0,1).
\end{align}
Here, we note that the typical example of $\psi_w$ is the following:
\[
\psi_w(\mu) = \frac{0.0505}{8 + \exp(\log_{10}(-\mu) - 2)} + \frac{0.139}{1.1 + \exp(2.3\log_{10}(-\mu) - 4.6)},
\]
(see Figure \ref{fig3}).
\begin{figure}[h]
\centering
\includegraphics[bb=0 0 560 420, width=80mm]{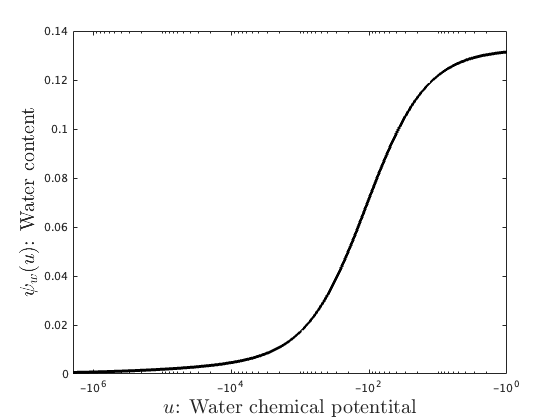} 
\caption{Graph of $\psi_w(u)$}
\label{fig3}
\end{figure}
Accordingly, the range of $\psi_w$ is bounded in $\mathbb{R}$. For this type of $\psi_w$, the equation \eqref{P-eq} is called elliptic-parabolic type and investigated. For instance, Alt-Luckhaus \cite{A-L} discussed the problem in case $p = 0$ and $\lambda(u) = \lambda(\psi(u))$, and Kenmochi-Pawlow studied in case $p = 0$. Recently, Uchida \cite{Uchida} considered the problem with $p = 0$ and a maximal monotone graph $\psi_w$. The aim of this paper is to treat the problem with $p$. However, the existence of $p$ leads to the difficulty for analysis. Therefore, we assume the following conditions to $\psi$ and $\lambda$:\\
\quad(A1) $\psi \in C^2({\mathbb R}), \psi>0, \delta_\psi \leq \psi' \leq C_{\psi}, |\psi''| \leq C_{\psi}$ on $\mathbb R$;\\
\quad(A2) $\lambda \in C^2({\mathbb R}), \delta_{\lambda} \leq \lambda \leq C_{\lambda}, |\lambda'|, |\lambda''| \leq  C_{\lambda}$ on $\mathbb R$.\\
where $\delta_\psi, C_\psi, \delta_\lambda$ and $C_\lambda$ are positive constants.

Under these assumptions we aim to establish existence and uniqueness of a strong solution to the problem P$\;:= \{ \eqref{P-eq}, \eqref{P-BC}, \eqref{P-IC} \}$.

\section{Notation and main result}
Throughout this paper, we use the following notations: 
\begin{align*}
H := L^{2}(0, 1), \quad X := H^{1}(0, 1),
\end{align*}
with the standard norms of $H, X$ and the inner product of $H$ denoted by $|\cdot|_{H} := |\cdot|_{L^{2}(0, 1)}$, $|\cdot|_{X} := |\cdot|_{H^{1}(0, 1)}$, and $(\cdot, \cdot)_H$, respectively. Also, let $D(f)$ be the effective domain and $\partial f$ be the subdifferential of a convex function $f$ on $H$.\\

%Next, we assume the following restrictions on the model functions:\\
%\quad(A2) $\psi \in C^2({\mathbb R}), 0 < \psi, \delta_\psi \leq \psi' \leq C_{\psi}, |\psi''| \leq C_{\psi}$ on $\mathbb R$;\\
%\quad(A3) $\lambda \in C^2({\mathbb R}), \delta_{\lambda} \leq \lambda \leq C_{\lambda}, |\lambda'|, |\lambda''| \leq  C_{\lambda}$ on $\mathbb R$.\\

Moreover, we define a strong solution of P on $Q(T)$ and give the main result of this paper as follows.
\begin{definition}
A function $u$ from $Q(T)$ to $\mathbb{R}$ is called a strong solution of P on $Q(T)$ if $u \in W^{1,2}(0, T; H) \cap L^\infty (0, T; X) \cap L^2 (0, T; H^2(0, 1))$ and satisfies 
\eqref{P-eq}, \eqref{P-BC} and \eqref{P-IC} in the usual sense.
\end{definition}

\begin{theorem}\label{P-SS}
Let $T>0$. If (A1), (A2), $p \in W^{1,2} (0, T; H^2(0,1)) $ and $u_0 \in X$ hold, then P has a unique strong solution on $Q(T)$.
\end{theorem}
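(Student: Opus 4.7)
The plan is to apply the Banach contraction principle to a linearized version of P. For a time horizon $T_0\le T$ and a closed ball $K$ of the space $Y:=C([0,T_0];H)\cap L^2(0,T_0;X)$, I would assign to each $\bar u\in K$ the unique solution $u$ of the frozen-coefficient linear problem
\begin{align*}
\psi'(\bar u)\,u_t-(\lambda(\bar u)u_x)_x&=(\lambda(\bar u)p_x)_x\ \text{in}\ Q(T_0),\\
\lambda(\bar u)u_x+p_x&=0\ \text{at}\ x=0,1,\\
u(0,\cdot)&=u_0.
\end{align*}
By (A1)--(A2) the coefficients are bounded above and bounded below by positive constants, so this is a uniformly parabolic linear equation with inhomogeneous Neumann data; standard Galerkin or semigroup arguments produce a unique $u\in W^{1,2}(0,T_0;H)\cap L^\infty(0,T_0;X)\cap L^2(0,T_0;H^2(0,1))$, defining a map $\mathcal T:\bar u\mapsto u$.

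Next I would derive estimates uniform in $\bar u\in K$. Testing with $u$ and invoking Gronwall controls $|u|_{L^\infty(0,T_0;H)}+|u_x|_{L^2(0,T_0;H)}$. Testing with $u_t$ and using the boundary identity $\lambda(\bar u)u_x=-p_x$ yields, after time integration,
\[
\int_0^{T_0}\!\!\int_0^1\psi'(\bar u)u_t^2\,dx\,dt+\frac12\bigl[\lambda(\bar u)u_x^2\bigr]_{t=0}^{t=T_0}=I_1+I_2+\int_0^{T_0}\bigl[-p_xu_t\bigr]_{x=0}^{x=1}dt,
\]
where $I_1,I_2$ collect $\tfrac12\partial_t\lambda(\bar u)u_x^2$ and $(\lambda(\bar u)p_x)_x u_t$ and are handled by (A2) and the regularity of $p$. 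The residual trace is controlled by integration by parts in time,
\[
\int_0^{T_0}p_x(t,x_b)u_t(t,x_b)\,dt=\bigl[p_x(\cdot,x_b)u(\cdot,x_b)\bigr]_0^{T_0}-\int_0^{T_0}p_{xt}(t,x_b)u(t,x_b)\,dt,\quad x_b\in\{0,1\},
\]
and then estimated via the one-dimensional embedding $X\hookrightarrow C([0,1])$ together with $p\in W^{1,2}(0,T;H^2(0,1))$. Absorbing the $u_t$-contributions by Young's inequality and applying Gronwall gives $u_t\in L^2(Q(T_0))$ and $u_x\in L^\infty(0,T_0;H)$; reading the PDE pointwise then produces the $L^2(0,T_0;H^2)$ bound on $u_{xx}$. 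Since these estimates depend only on $|u_0|_X$, the $W^{1,2}(0,T;H^2)$-norm of $p$, and the constants in (A1)--(A2), one has $\mathcal T(K)\subset K$ for a sufficiently large ball $K$.

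For contraction, take $\bar u_1,\bar u_2\in K$ with $u_i=\mathcal T\bar u_i$; the difference $w=u_1-u_2$ satisfies a linear equation whose sources and boundary data involve $\psi'(\bar u_1)-\psi'(\bar u_2)$ and $\lambda(\bar u_1)-\lambda(\bar u_2)$ multiplied by the already-controlled quantities $(u_2)_t,(u_2)_x,p_x,p_{xx}$. Using the Lipschitz bounds from (A1)--(A2) and the uniform estimates on $u_2$, an energy estimate for $w$ yields
\[
\|w\|_{Y}\le C\sqrt{T_0}\,\|\bar u_1-\bar u_2\|_{Y},
\]
which is a strict contraction for $T_0$ small. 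The Banach fixed point theorem provides a unique fixed point, hence a strong solution of P on $[0,T_0]$, and the argument is iterated on successive subintervals—with constants uniform in the initial time—to reach $T$. Uniqueness on $[0,T]$ follows from the same comparison estimate applied to any two hypothetical strong solutions.

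The main obstacle is precisely the non-homogeneous and non-monotone boundary condition \eqref{P-BC}: the trace $[p_xu_t]_0^1$ does not cancel as it would in a pure no-flux setting, and the $u_t$-energy estimate cannot be closed by naive Young's-inequality absorption because $u_t$ has no a priori trace control. The integration-by-parts-in-time trick above is the crucial move, and this is where the hypothesis $p\in W^{1,2}(0,T;H^2(0,1))$ is used in full strength (providing both $p_x(t,x_b)$ and $p_{xt}(t,x_b)$ with the right time integrability). Once this trace term is mastered, the remaining steps are a routine application of linear parabolic theory and the contraction principle.
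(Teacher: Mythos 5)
There is a genuine gap, and it sits exactly where you locate the ``main obstacle'' --- but it is not the boundary trace (which you handle correctly by integrating by parts in time, much as the paper does); it is the term you call $I_1$. Because you freeze the principal coefficient, your $u_t$-energy identity contains
\[
\frac12\int_0^{T_0}\!\!\int_0^1 \partial_t\lambda(\bar u)\,u_x^2\,dx\,dt
=\frac12\int_0^{T_0}\!\!\int_0^1 \lambda'(\bar u)\,\bar u_t\,u_x^2\,dx\,dt ,
\]
and (A2) only bounds $\lambda'$, not $\bar u_t$. For $\bar u$ ranging over a ball of $C([0,T_0];H)\cap L^2(0,T_0;X)$ the quantity $\bar u_t$ is not even defined, so this term cannot be estimated uniformly over $K$, the $L^\infty(0,T_0;X)$ bound on $u_x$ does not close, and neither the invariance $\mathcal T(K)\subset K$ nor the $H^2$ bound follows. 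Enlarging $K$ to carry a $W^{1,2}(0,T_0;H)$ bound does not obviously repair this: the term then couples $|\bar u_t|_H$ with $|u_x|_{L^4(0,1)}^2$, which in turn calls on $|u_{xx}|_H$, and one is back to the estimate one is trying to prove. A secondary weakness is the claim that the frozen problem is solved by ``standard Galerkin or semigroup arguments'' with the stated maximal regularity: the leading coefficient $\lambda(\bar u)$ is merely measurable in $t$ and $H^1$ in $x$, uniformly only in an $L^2(0,T_0;X)$ sense, so the $L^2(0,T_0;H^2)$ regularity is not off-the-shelf.

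The paper's construction is designed precisely to avoid this term: it freezes only the lower-order source $\lambda(\tilde u)p_x$ and keeps the principal part nonlinear in the unknown, so that after the Kirchhoff substitution $v=\hat\lambda(u)$ one has $\partial_x(\lambda(u)u_x)=\partial_x^2\hat\lambda(u)$, and testing with $\partial_t\hat\lambda(u)$ produces the exact derivative $\frac{d}{dt}\frac12|\partial_x\hat\lambda(u)|_H^2$ with no time derivative of any coefficient appearing (Lemma \ref{lem43}). The price is that the auxiliary problem remains a quasilinear elliptic--parabolic equation, solved not by linear theory but by the evolution-equation framework $\hat u'+\partial\varphi^t(B\hat u)\ni f$ with a time-dependent convex functional encoding the boundary data (Proposition \ref{AP-SS}, via Lemmas \ref{ap_lem_first}--\ref{ap_lem_last}). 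If you want to keep a fully linearized scheme, you would have to work in a fixed-point class that controls $\bar u_t$ in a norm strong enough to absorb $\int\lambda'(\bar u)\bar u_t u_x^2$ and then prove the contraction in a compatible topology; as written, the argument does not go through.
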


We prepare fundamental results to prove the main theorem. In Section 4 we prove the theorem. 

\section{Preparation for the proof}
First, we give some useful inequalities without proofs.
\begin{lemma}\label{abs_ineq}
For any $u \in X$, it holds that
\begin{align}
\label{abs_L2partition} |u(x)|^2 
&\leq |u|_H^2 + 2|u|_H |u_x|_H \;\; {\it for} \; x \in [0,1],\\
\label{abs_L4partition}|u|^4_{L^4(0,1)} 
&\leq (|u|_H^2 + 2|u|_H |u_x|_H)^2.
\end{align}
\end{lemma}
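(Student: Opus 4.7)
The plan is to prove the two inequalities in sequence, deriving \eqref{abs_L4partition} as a direct consequence of \eqref{abs_L2partition}. Both are standard one-dimensional Sobolev embedding estimates; the key idea is that on the bounded interval $(0,1)$ the value $u(x)$ at any point is controlled by the $L^2$-norms of $u$ and $u_x$, with the explicit constant sharp enough to be useful in the later energy arguments of Section~4.

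For \eqref{abs_L2partition}, I would start from the fundamental theorem of calculus applied to $u^2$. For $u \in X$ (so that $u$ has a continuous representative on $[0,1]$) and any $x, y \in [0,1]$,
\begin{equation*}
u(x)^2 = u(y)^2 + 2\int_y^x u(s) u_x(s)\,ds.
\end{equation*}
Integrating in $y$ over $[0,1]$ (whose length is $1$) and estimating the inner integral by $\int_0^1 |u(s) u_x(s)|\,ds$, then applying the Cauchy--Schwarz inequality in $H$, yields
\begin{equation*}
u(x)^2 \le \int_0^1 u(y)^2\,dy + 2\int_0^1 |u(s)||u_x(s)|\,ds \le |u|_H^2 + 2|u|_H|u_x|_H,
\end{equation*}
which is \eqref{abs_L2partition}.

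For \eqref{abs_L4partition}, I would simply take the supremum in \eqref{abs_L2partition} over $x \in [0,1]$, obtaining $\|u\|_{L^\infty(0,1)}^2 \le |u|_H^2 + 2|u|_H|u_x|_H$. Then
\begin{equation*}
|u|_{L^4(0,1)}^4 = \int_0^1 u(x)^4\,dx \le \|u\|_{L^\infty(0,1)}^4 \cdot 1 \le \bigl(|u|_H^2 + 2|u|_H|u_x|_H\bigr)^2,
\end{equation*}
which is \eqref{abs_L4partition}.

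There is no real obstacle here: the only subtle points are that one must invoke the continuous representative of $u$ in $X = H^1(0,1)$ to make pointwise evaluation meaningful, and that on the unit interval the length factor equals $1$ so no additional constant appears in the stated bounds. Both inequalities are stated without proof in the paper precisely because they follow from this short computation.
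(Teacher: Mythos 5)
Your proof is correct and complete; the paper states Lemma \ref{abs_ineq} explicitly ``without proofs,'' and your argument is exactly the standard computation one would supply: the fundamental theorem of calculus for $u^2$ integrated in the base point $y$, Cauchy--Schwarz for \eqref{abs_L2partition}, and the $L^\infty$ bound raised to the fourth power for \eqref{abs_L4partition}. Nothing further is needed.
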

In order to deal the nonlinear diffusion term, we introduce the function $\hat{\lambda}$ on $\mathbb{R}$ by $\hat{\lambda}(u) = \int^u_0 \lambda(r) dr$ for any $u \in \mathbb{R}$. For $\hat{\lambda}, \lambda$ and $\psi$, we can easily obtain
\begin{align}
\label{func_ineq}\delta_\lambda |u| 
&\leq |\hat{\lambda}(u)| \leq C_\lambda |u| \; {\rm for} \; u \in \mathbb{R}.
\end{align}

Next, we denote AP($\tilde{u}$) by the auxiliary problem \eqref{AP-eq} - \eqref{AP-IC} for any a given function $\tilde{u}$ on $Q(T)$:
\begin{align}
&\label{AP-eq}\frac{\partial}{\partial t} \psi(u) = \frac{\partial}{\partial x}\left( \lambda (u) \frac{\partial u}{\partial x} + \lambda (\tilde{u}) \frac{\partial p}{\partial x} \right) \; \mathrm{in} \; Q(T),\\
&\lambda(u)\frac{\partial u}{\partial x} + \frac{\partial p}{\partial x} = 0 \; \mathrm{at} \; x =0,1,\\
&\label{AP-IC}u(0,x) = u_0(x) \; \mathrm{for} \; x \in(0,1).
\end{align}

We prove Theorem \ref{P-SS} by applying the Banach fixed point theorem for a solution operator obtained from AP($\tilde{u}$). To deal with it, we define a solution AP($\tilde{u}$) and give a proposition concerned with solvability of AP($\tilde{u}$).

\begin{definition}
For any function $\tilde{u}$ in $Q(T)$, a function $u$ from $Q(T)$ to $\mathbb{R}$ is called a strong solution of AP($\tilde{u}$) on $Q(T)$ if $u \in W^{1,2}(0, T; H) \cap L^\infty (0, T; X) \cap L^2 (0, T; H^2(0, 1))$ satisfies \eqref{AP-eq} - \eqref{AP-IC} in the usual sense.
\end{definition}

\begin{proposition}\label{AP-SS}
Let $T>0, p \in W^{1,2} (0, T; H^2(0,1))$ and $u_0 \in X$. If $\tilde{u} \in L^2(0,T; X)$, then AP($\tilde{u}$) has a unique strong solution on $Q(T)$.
\end{proposition}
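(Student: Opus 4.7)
I would reduce AP$(\tilde u)$ to a standard parabolic problem by a change of variables, remove the inhomogeneous Neumann trace by a lift, and then apply evolution-equation theory on $H$. Since $\lambda\ge\delta_\lambda>0$ by (A2), the primitive $\hat\lambda(u)=\int_0^u\lambda(r)\,dr$ is a $C^2$-diffeomorphism of $\mathbb{R}$ with inverse $\tilde\lambda$. Setting $v:=\hat\lambda(u)$, the system \eqref{AP-eq}--\eqref{AP-IC} rewrites as
\begin{align*}
\partial_t\Psi(v) &= v_{xx}+\partial_x\!\bigl(\lambda(\tilde u)\,p_x\bigr)\quad\text{in } Q(T),\\
v_x+p_x &= 0 \quad \text{at } x=0,1, \qquad v(0,\cdot)=\hat\lambda(u_0),
\end{align*}
with $\Psi(s):=\psi(\tilde\lambda(s))$ of class $C^2$, $\delta_\psi/C_\lambda\le\Psi'\le C_\psi/\delta_\lambda$, and $\Psi''\in L^\infty(\mathbb{R})$ by (A1)--(A2). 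The forcing $\partial_x(\lambda(\tilde u)p_x)=\lambda'(\tilde u)\tilde u_xp_x+\lambda(\tilde u)p_{xx}$ lies in $L^2(Q(T))$ because $\tilde u\in L^2(0,T;X)\hookrightarrow L^2(0,T;C[0,1])$ and $p\in W^{1,2}(0,T;H^2)$. Choosing a quadratic-in-$x$ lift $r(t,\cdot)\in W^{1,2}(0,T;H^2(0,1))$ with $r_x(t,j)=-p_x(t,j)$ for $j=0,1$ (possible since $p_x(\cdot,j)\in W^{1,2}(0,T)$) and setting $z:=v-r$ produces a homogeneous Neumann problem
\[
\Psi'(z+r)\,z_t - z_{xx} = F(t,z) \quad\text{in } Q(T), \qquad z_x|_{x=0,1}=0,
\]
with $F(t,z):=r_{xx}+\partial_x(\lambda(\tilde u)p_x)-\Psi'(z+r)r_t\in L^2(0,T;H)$, Lipschitz in $z$ on bounded sets.

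For existence I would run an inner Banach fixed-point to deal with the $z$-dependent coefficient $\Psi'(z+r)$: freezing $\bar z$ inside $\Psi'(\bar z+r)$ leaves a linear parabolic equation with smooth bounded positive coefficient, solvable via the subdifferential framework on $H$ with $\varphi(z)=\tfrac12|z_x|_H^2$ (whose $H$-subdifferential is $-\partial_x^2$ on the homogeneous Neumann domain); the map $\bar z\mapsto z$ is a contraction on $L^2(0,T_0;H)$ for small $T_0$ thanks to $\Psi''\in L^\infty$, and the argument iterates to $[0,T]$. The uniform bounds are obtained in two passes. Testing the $z$-equation against $z$, using the energy $\int_0^1\!\int_0^{z(t,x)+r(t,x)} s\,\Psi'(s)\,ds\,dx$ to rewrite the time term and Young's inequality to absorb $F$, gives $z\in L^\infty(0,T;H)\cap L^2(0,T;X)$; testing against $z_t$ exploits $z_x|_{x=0,1}=0$ to turn $\int z_{xx}z_t$ into $-\tfrac12\tfrac{d}{dt}|z_x|_H^2$, after which $\Psi'\ge\delta_\psi/C_\lambda$, Lemma \ref{abs_ineq}, and the bounds on $\Psi',\Psi'',\lambda',\lambda''$ control the cross terms to yield $z_t\in L^2(0,T;H)$ and $z\in L^\infty(0,T;X)$; the equation itself then forces $z_{xx}\in L^2(0,T;H)$. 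Inverting the transformation returns $u=\tilde\lambda(z+r)$ in the required class $W^{1,2}(0,T;H)\cap L^\infty(0,T;X)\cap L^2(0,T;H^2(0,1))$.

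For uniqueness, let $u_1,u_2$ be two strong solutions and set $v_j=\hat\lambda(u_j)$; subtracting the transformed equations and testing against $\Psi(v_1)-\Psi(v_2)$, the boundary contribution vanishes because $v_{j,x}=-p_x$ at $x=0,1$, and using $\Psi'\ge\delta_\psi/C_\lambda$, $|\Psi''|\le C$, and Lemma \ref{abs_ineq} to control the cross term involving $v_{2,x}$, a Gronwall inequality forces $\Psi(v_1)\equiv\Psi(v_2)$, hence $u_1\equiv u_2$. \textbf{Main obstacle.} The $z$-dependent coefficient $\Psi'(z+r)$ on the time derivative prevents a direct use of the classical Brezis theorem for $z_t+\partial\varphi(z)\ni f$; the inner fixed-point step is introduced for this reason, and its success hinges on the uniform lower bound $\Psi'\ge\delta_\psi/C_\lambda>0$, which itself rests on the simultaneous positivity of $\psi'$ and $\lambda$ in (A1)--(A2).
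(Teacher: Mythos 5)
Your reduction via $v=\hat\lambda(u)$, the verification that the forcing lies in $L^2(Q(T))$, and the uniqueness argument are all sound, and they parallel the paper's own transformation to $\overline{\rm AP}(\tilde u)$. The decisive difference is how the evolution problem is then solved, and this is where your plan has a genuine gap. The paper never expands the chain rule: it keeps the doubly nonlinear form $\hat u'+\partial\varphi^t(B\hat u)\ni f$ with $\hat u=b(v)$, $B=b^{-1}$, and it does not homogenize the Neumann data by a lift but instead absorbs it into the time-dependent convex functional $\varphi^t(z)=\tfrac12|z_x|_H^2+h(t,1)z(1)-h(t,0)z(0)$; after checking properness, lower semicontinuity, the characterization of $\partial\varphi^t$, the $t$-dependence estimates and the monotonicity of $B$ (Lemmas \ref{ap_lem_first}--\ref{ap_lem_last}), existence and uniqueness follow in one stroke from \cite[Theorem 2.8.1]{Kenmochi}. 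Your route instead writes $\Psi'(z+r)z_t-z_{xx}=F$ and runs an inner Banach iteration with the coefficient $\Psi'(\bar z+r)$ frozen.

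The gap is in that inner contraction. Comparing two frozen solutions $z_1,z_2$ with coefficients $a_i=\Psi'(\bar z_i+r)$ yields the difference equation $a_1(z_1-z_2)_t-(z_1-z_2)_{xx}=-(a_1-a_2)(z_{2t}+r_t)$. Since $z_{2t}(t)$ lies only in $H$, estimating the right-hand side forces you to measure $\bar z_1-\bar z_2$ in $L^\infty(0,1)$, hence in an $X$-type norm, not in $H$; and the natural test function is $(z_1-z_2)_t$ (testing with $z_1-z_2$ would require differentiating $\int a_1(z_1-z_2)^2\,dx$ in time, which is meaningless for $\bar z$ merely in $L^2(0,T_0;H)$). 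The resulting bound is of the form $\sup_t|(z_1-z_2)(t)|_H^2\le C\bigl(\int_0^{T_0}|z_{2t}+r_t|_H^2\,d\tau\bigr)\sup_t\|\bar z_1-\bar z_2\|_{L^\infty(0,1)}^2$, and the factor $\int_0^{T_0}|z_{2t}|_H^2\,d\tau$ is controlled only by a constant involving $|z_x(0)|_H$, which does \emph{not} shrink as $T_0\to0$. So the map $\bar z\mapsto z$ is not a contraction on $L^2(0,T_0;H)$ for small $T_0$ as claimed, and on bounded sets of $L^\infty(0,T_0;X)$ one only gets a H\"older-type modulus rather than a Lipschitz constant below $1$. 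To close the existence step you would need either to replace Banach by a compactness (Schauder) argument with the a priori bounds you already sketch, or to avoid expanding the chain rule altogether and treat $\partial_t b(v)=v_{xx}+f$ in the doubly nonlinear subdifferential framework, which is exactly the paper's choice; the latter also shows the lift $r$ is unnecessary, since the inhomogeneous boundary terms sit harmlessly as affine perturbations inside $\varphi^t$.
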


To prove Proposition \ref{AP-SS}, we  rewrite AP($\tilde{u}$) as follows. Put $v = \hat{\lambda}(u)$. Then, for given $\tilde{u}$ it holds that
\begin{align}
&\label{APbar-eq}\frac{\partial b(v)}{\partial t} = \frac{\partial^2 v}{\partial x^2} + f \; \mathrm{in} \; Q(T),\\
&\frac{\partial v}{\partial x} + h = 0 \; \mathrm{at} \; x =0,1,\\
&\label{APbar-IC}v(0,x) = v_0(x) \; \mathrm{for} \; x \in(0,1),
\end{align}
where $b = \psi(\hat{\lambda}^{-1})$ on $\mathbb{R}$, $f =\frac{\partial}{\partial x} \left\{ \lambda(\tilde{u}) \frac{\partial p}{\partial x} \right\}$ and $h  = \frac{\partial p}{\partial x}$ on $Q(T)$, and $v = \hat{\lambda}(u), v_0 = \hat{\lambda}(u_0)$ on $(0, 1)$. We denote the problem \eqref{APbar-eq} - \eqref{APbar-IC} by $\overline{\rm AP}$($\tilde{u}$). In order to apply the evolution equation theory, we put $B = b^{-1}$ and define a function $\varphi^t$ from $H$ to $( -\infty, \infty]$ by
\begin{align*}
\varphi^t (z) = \left\{
\begin{array}{ll}
\frac{1}{2} \int^1_0 \left( \frac{\partial z}{\partial x} \right)^2 dx + h(t, 1)z(1) - h(t, 0)z(0) & {\rm if} \; z \in X,\\
\infty & {\rm otherwise}.
\end{array}
\right.
\end{align*}
For $\varphi^t(z)$, the following lemma holds. 

\begin{lemma}\label{ap_lem_first}
If $h(\cdot, i) \in W^{1,2}(0, T)$ for $i = 0,1$, then $\varphi^t(z)$ is proper, convex and lower semi continuous (l.s.c.) on $H$.
\end{lemma}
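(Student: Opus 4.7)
The plan is to verify the three properties in turn, treating the boundary terms $h(t,1)z(1)-h(t,0)z(0)$ as a bounded affine perturbation of the standard Dirichlet energy. Note that the hypothesis $h(\cdot,i)\in W^{1,2}(0,T)\hookrightarrow C([0,T])$ ensures $h(t,i)$ is a finite constant at the fixed time $t$.

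Convexity follows by inspection: the quadratic integral $\frac12\int_0^1 z_x^2\,dx$ is convex on $X$, the boundary contribution is linear in $z$, and extending by $+\infty$ off $X$ preserves convexity on $H$. For properness, evaluating at $z\equiv 0$ gives $\varphi^t(0)=0$, so $\varphi^t\not\equiv +\infty$. To show $\varphi^t>-\infty$ and to prepare for lower semicontinuity, I would establish a coercivity estimate on $X$: using Lemma \ref{abs_ineq} to bound $|z(i)|$ by $|z|_H+|z|_H^{1/2}|z_x|_H^{1/2}$ and Young's inequality to absorb the mixed term into a fraction of $|z_x|_H^2$, one obtains
\[
\varphi^t(z) \geq \tfrac14|z_x|_H^2 - C\bigl(1+|z|_H^2\bigr)\quad\text{for all } z\in X,
\]
with $C$ depending only on $|h(t,0)|$ and $|h(t,1)|$. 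In particular, every sublevel set of $\varphi^t$ is bounded in $X$.

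For lower semicontinuity, I would take $z_n\to z$ in $H$ with $L:=\liminf_n \varphi^t(z_n)<\infty$ (otherwise there is nothing to prove) and extract a subsequence realising $L$. By the coercivity bound, $(z_n)$ is bounded in $X$, so a further subsequence converges weakly in $X$ and, via the compact embedding $X\hookrightarrow C([0,1])$, uniformly on $[0,1]$. Uniqueness of the $H$-limit forces the weak $X$-limit to be $z$, so in particular $z\in X$ and $z_n(i)\to z(i)$ for $i=0,1$; hence the boundary contribution converges along the sequence. The Dirichlet part is weakly lower semicontinuous on $X$ as the square of a seminorm, and combining the two facts yields $\varphi^t(z)\leq L$, as required.

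The main obstacle is precisely the sign-indefinite, non-monotone boundary perturbation: without it the functional would be the usual Dirichlet energy and each claim would be immediate. The perturbation is what forces the use of Lemma \ref{abs_ineq} together with Young's inequality to restore coercivity (and thus properness), and it is also why lower semicontinuity under $H$-convergence cannot be read off directly but instead requires the trace compactness supplied by $X\hookrightarrow C([0,1])$.
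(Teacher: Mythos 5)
Your proposal is correct and follows essentially the same route as the paper: convexity and properness by inspection, then a coercivity bound obtained from Lemma \ref{abs_ineq} and Young's inequality (this is exactly the paper's estimate \eqref{lem33-eq}) to get boundedness in $X$ of a minimizing subsequence, followed by weak $X$-convergence, convergence of the boundary traces, and weak lower semicontinuity of the Dirichlet term. No substantive differences to report.
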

\begin{proof}
It is obvious that $\varphi^t$ is convex and proper on $H$ for any $t \in [0 ,T]$.

Let $t \in [0,T]$. In order to prove that $\varphi^t$ is l.s.c. we show that $\varphi^t(z) \leq \liminf_{n \rightarrow \infty} \varphi^t(z_n)$ for any sequence $\{ z_n \}$ of $H$ with $z_n \to z$ in $H$ as $n \to \infty$. Assume $\liminf_{n \rightarrow \infty}\varphi^t(z_n) < \infty$, and put $\alpha = \liminf_{n \rightarrow \infty}\varphi^t(z_n)$. It is easy to see that there exists a subsequence $\{n_j\}$ that satisfies $\varphi^t(z_{n_j}) \rightarrow \alpha$ as $j \to \infty$. From now on, we write $z_j = z_{n_j}$ for each $j$. Since $\{ z_j \}$ and $\{ \varphi^t(z_j)\}$ are convergence sequences, there exists a constant $C>0$ such that $|z_j|_H \leq C$ and $\varphi^t(z_j) \leq C$ for any $j \in \mathbb{N}$. Therefore, by using Lemma \ref{abs_ineq} and Young's inequality, we have
\begin{align}
C &\geq \frac{1}{2} \int^1_0 \left(\frac{\partial z_j}{\partial x} \right)^2 dx - (|h(t,1)| + |h(t,0)|)\left(2|z_j|_H\left|\frac{\partial z_j}{\partial x}\right|_H + |z_j|^2_H\right)^{\frac{1}{2}}\nonumber \\ 
&\label{lem33-eq} \geq \frac{1}{2} \int^1_0 \left(\frac{\partial z_j}{\partial x} \right)^2 dx - \frac{1}{2}(|h(t,1)| + |h(t,0)|)^2 - \frac{1}{2} \left(3|z_j|^2_H + \frac{1}{2}\left|\frac{\partial z_j}{\partial x}\right|^2_H\right).
\end{align}
This inequality implies that $\{z_j\}$ is bounded in $X$.

Accordingly, we can take a subsequence $\{ j_k\}$ and $\hat{z} \in X$ such that
\[
z_{j_k} \rightarrow \hat{z} \; \mathrm{weakly} \; \mathrm{in} \; X \; {\rm as} \; k \to \infty.
\]
Clearly, we have $z = \hat{z}, z_{j_k}(i) \to z(i)$ in $\mathbb{R}$ as $k \to \infty$ for $i = 0, 1$, and \\$\liminf_{k \to \infty} \frac{1}{2} \int^1_0 \left| \frac{\partial z_{j_k}}{\partial x} \right|^2 dx \geq \frac{1}{2} \int^1_0 \left| \frac{\partial z}{\partial x} \right|^2 dx$. Thus, $\varphi^t$ is l.s.c. on $H$.
\end{proof}

We show the following  lemmas concerned with the subdifferential of $\varphi^t$.

\begin{lemma}
Let $t \in [0, T]$ and $z \in H$. If $\partial\varphi^t(z) \neq \varphi$, then $\partial \varphi^t(z)$ is single-valued.
\end{lemma}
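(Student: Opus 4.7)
The plan is to exploit the fact that $\varphi^t$ is the sum of a quadratic form and a bounded linear term on $X$, so its Gâteaux derivative at any point of its effective domain exists and is a well-defined linear functional on $X$; any element of $\partial\varphi^t(z)$ must represent this derivative through the $H$-inner product, and density of $X$ in $H$ then forces uniqueness.

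First I would observe that, since $\partial \varphi^t(z) \neq \emptyset$ is assumed, necessarily $\varphi^t(z) < \infty$, hence $z \in X$. Take any $\zeta_1, \zeta_2 \in \partial \varphi^t(z)$. By definition, for every $w \in H$ and each $i = 1, 2$,
\begin{equation*}
\varphi^t(w) - \varphi^t(z) \geq (\zeta_i, w - z)_H.
\end{equation*}

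Next, for an arbitrary test function $\eta \in X$ and $\epsilon > 0$, I plug in $w = z + \epsilon \eta \in X$. Expanding using the explicit formula for $\varphi^t$, the quadratic term contributes $\epsilon \int_0^1 z_x \eta_x\,dx + \frac{\epsilon^2}{2}\int_0^1 \eta_x^2\,dx$, and the linear boundary term contributes $\epsilon\bigl(h(t,1)\eta(1) - h(t,0)\eta(0)\bigr)$. Dividing by $\epsilon$, letting $\epsilon \downarrow 0$, and then repeating with $-\eta$ in place of $\eta$, I obtain the equality
\begin{equation*}
(\zeta_i, \eta)_H = \int_0^1 z_x \eta_x\,dx + h(t,1)\eta(1) - h(t,0)\eta(0) \quad \text{for all } \eta \in X,
\end{equation*}
for both $i = 1, 2$. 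Subtracting these two identities yields $(\zeta_1 - \zeta_2, \eta)_H = 0$ for every $\eta \in X$.

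To conclude, I would invoke the density of $X = H^1(0,1)$ in $H = L^2(0,1)$: the identity $(\zeta_1 - \zeta_2, \eta)_H = 0$ for all $\eta$ in a dense subspace of $H$ forces $\zeta_1 = \zeta_2$ in $H$, which is the single-valuedness claim. There is no serious obstacle in this argument; the only point to be careful about is that the test perturbations $z + \epsilon \eta$ must lie in $X$ (so that the subdifferential inequality gives nontrivial information rather than the vacuous $\infty \geq \cdots$), which is automatic because both $z$ and $\eta$ belong to $X$.
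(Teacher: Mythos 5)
Your proposal is correct and follows essentially the same route as the paper: perturb $z$ by $\epsilon\eta$ with $\eta \in X$, expand $\varphi^t$, divide by $\epsilon$, let $\epsilon \downarrow 0$, replace $\eta$ by $-\eta$ to upgrade the inequality to an equality, and conclude $\zeta_1 = \zeta_2$ from $(\zeta_1 - \zeta_2, \eta)_H = 0$ for all $\eta \in X$. The only cosmetic difference is that you explicitly invoke density of $X$ in $H$ at the final step, which the paper leaves implicit.
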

\begin{proof}
By the assumption we have $z \in X$.
Let $z_1^\ast$ and $z_2^\ast \in \partial \varphi^t(z)$. From the definition of subdifferential, it follows that
\[
(z_i^\ast, y-z)_H \leq \varphi^t(y) - \varphi^t(z) \;\; {\rm for} \; i = 1,2, \; y \in H.
\]
Let $\eta \in X$ and $\varepsilon_1 > 0$. Since $z +\varepsilon_1 \eta \in X$, it follows that
\[
(z_i^\ast,  \eta)_H 
\leq \int^1_0 \frac{\partial z}{\partial x}\frac{\partial \eta}{\partial x} dx + \frac{\varepsilon_1}{2}\int^1_0 \left(\frac{\partial \eta}{\partial x}\right)^2 dx + h(t,1)\eta(1) - h(t,0)\eta(0) \;\; {\rm for} \; i = 1,2.
\]
Here, by letting $\varepsilon_1 \to 0$,
\begin{align}
\label{lem2_leq}(z_i^\ast,  \eta)_H 
\leq \int^1_0 \frac{\partial z}{\partial x}\frac{\partial \eta}{\partial x} dx + h(t,1)\eta(1) - h(t,0)\eta(0) \;\; {\rm for} \; i = 1,2.
\end{align}
By substituting $-\eta \in X$ into \eqref{lem2_leq}, we have 
\begin{align}
\label{b2}(z_i^\ast,  \eta)_H = \int^1_0 \frac{\partial z}{\partial x}\frac{\partial \eta}{\partial x} dx + h(t,1)\eta(1) - h(t,0)\eta(0) \;\; {\rm for} \; i = 1,2, \;  \eta \in X.
\end{align}
Therefore, we get $(z_1^\ast - z_2^\ast, \eta)_H = 0$ for any $\eta \in X$. Thus, we have $z_1^\ast = z_2^\ast \; \mathrm{in} \; H$.
\end{proof}

\begin{lemma}\label{b1-5}
For any $t \in [0, T]$ and $z \in H$, $z^\ast = \partial \varphi^t(z)$ if and only if the following (B1)-(B4) hold. 
\begin{enumerate}[\textrm{(B}1\textrm{)}]
\item $z \in W^{2,2}(0,1), z^\ast \in H$.
\item $(z^\ast, \eta)_H = \int^1_0  \frac{\partial \eta}{\partial x} \frac{\partial z}{\partial x} dx + h(t, 1)\eta(1) - h(t, 0)\eta(0) \;\; {\it for \; any} \; \eta \in X$.
\item $z^\ast = - z_{xx} \;\; {\it a.e. \; on} \; (0,1)$.
\item $z_x(1) + h(t,1) = 0$ and $z_x(0) + h(t, 0) = 0$.
\end{enumerate}
\end{lemma}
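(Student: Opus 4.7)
The plan is to prove the equivalence in both directions, with the necessity direction building on equation \eqref{b2} from the previous lemma, and the sufficiency direction exploiting the convex plus linear structure of $\varphi^t$.

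\textbf{Necessity.} Suppose $z^\ast = \partial\varphi^t(z)$. From equation \eqref{b2} we already have (B2). To obtain (B1) and (B3), I would first restrict $\eta$ to $C_c^\infty(0,1)$ in (B2): the boundary terms vanish and we get $(z^\ast,\eta)_H = \int_0^1 z_x \eta_x\,dx$, which means $-z_{xx} = z^\ast$ in the distributional sense. Since $z^\ast \in H$, this elevates $z$ to $W^{2,2}(0,1)$, giving (B1) and (B3). For (B4), I would take a general $\eta \in X$, integrate by parts in (B2) using $z \in W^{2,2}$, and subtract $(z^\ast,\eta)_H = -(z_{xx},\eta)_H$ on both sides to obtain
\[
0 = (z_x(1) + h(t,1))\eta(1) - (z_x(0) + h(t,0))\eta(0) \quad \text{for all } \eta \in X.
\]
Choosing $\eta$ with $\eta(0) = 0,\ \eta(1) = 1$ and then $\eta(0) = 1,\ \eta(1) = 0$ (both possible since $X = H^1(0,1)$ surjects onto boundary values) yields (B4).

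\textbf{Sufficiency.} Conversely, assume (B1)--(B4) hold. For $y \in H \setminus X$ the subdifferential inequality $\varphi^t(y) - \varphi^t(z) \geq (z^\ast, y-z)_H$ is trivial since $\varphi^t(y) = \infty$. For $y \in X$, I would apply the elementary convexity inequality $\tfrac12 y_x^2 - \tfrac12 z_x^2 \geq z_x(y_x - z_x)$ to obtain
\[
\varphi^t(y) - \varphi^t(z) \geq \int_0^1 z_x(y-z)_x\,dx + h(t,1)(y(1)-z(1)) - h(t,0)(y(0)-z(0)).
\]
Integration by parts using (B1) and (B3), followed by substitution of the boundary identities (B4), collapses the right-hand side precisely to $(z^\ast, y-z)_H$. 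This shows $z^\ast \in \partial\varphi^t(z)$, and single-valuedness from the previous lemma gives $z^\ast = \partial\varphi^t(z)$.

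The steps are all routine once one recognizes the additive structure $\varphi^t = (\text{quadratic}) + (\text{boundary linear form})$; the only mild subtlety is making sure that in the necessity direction one extracts $W^{2,2}$-regularity from a merely $H$-valued distributional second derivative, and that the boundary test functions in $X$ can be chosen to separate the two endpoint conditions in (B4). No novel estimates or approximation schemes are required, so I do not anticipate a serious obstacle.
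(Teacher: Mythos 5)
Your proposal is correct and follows essentially the same route as the paper: necessity via \eqref{b2} restricted to $C^\infty_0(0,1)$ test functions to get (B1) and (B3), then integration by parts and endpoint-separating test functions for (B4); the paper simply declares sufficiency ``obvious,'' and your convexity-plus-integration-by-parts argument is the standard way to fill that in. No gaps.
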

\begin{proof}
Assume $z^\ast = \partial \varphi^t(z)$. First, by \eqref{b2} we have
\begin{align*}
(-z^\ast, \eta)_H =\int^1_0 z \frac{\partial^2 \eta}{\partial x^2} dx \;\; {\rm for} \; \eta \in C^\infty_0(0, 1).
\end{align*}
This implies (B3) and (B1). Also, (B2), (B4)  are direct consequences of  \eqref{b2}.

The sufficiency of (B1) - (B4) is obvious.
\end{proof}

\begin{lemma}\label{h2}
Let $p \in W^{1,2}(0, T; H^2(0,1))$. For each $r>0$, there exist $\alpha_r \in W^{1,2}(0,T)$ and $\beta_r \in W^{1,1}(0, T)$ satisfying: For each $s, t \in [0, T]$ with $s \leq t$ and $z \in D(\varphi^s)$ with $|z|_H \leq r$ there exist $z_1 \in D(\varphi^t)$ such that
\begin{align}
\label{h2-ineq}|z_1 - z|_H &\leq |\alpha_r(t) - \alpha_r(s)|(1+|\varphi^s(z)|^{\frac{1}{2}}),\\
\varphi^t(z_1) - \varphi^s(z) &\leq |\beta_r(t) - \beta_r(s)|(1+|\varphi^s(z)|). \nonumber
\end{align}
\end{lemma}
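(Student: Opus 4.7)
The plan is to exploit the fact that $D(\varphi^t) = X$ is independent of $t$, and simply take $z_1 = z$. Then the first inequality holds trivially with $\alpha_r \equiv 0$ (viewed as an element of $W^{1,2}(0,T)$), and everything reduces to controlling
\begin{equation*}
\varphi^t(z) - \varphi^s(z) = \bigl(h(t,1) - h(s,1)\bigr)z(1) - \bigl(h(t,0) - h(s,0)\bigr)z(0)
\end{equation*}
by $|\beta_r(t) - \beta_r(s)|(1+|\varphi^s(z)|)$ for a suitable $\beta_r \in W^{1,1}(0,T)$.

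The key step is a pointwise trace bound for $z$ in terms of $\varphi^s(z)$. Rewriting $\frac{1}{2}|z_x|_H^2 = \varphi^s(z) - h(s,1)z(1) + h(s,0)z(0)$ and applying the trace inequality \eqref{abs_L2partition} together with Young's inequality, exactly in the spirit of the computation leading to \eqref{lem33-eq}, yields an estimate of the form
\begin{equation*}
|z_x|_H^2 \leq C_1|\varphi^s(z)| + C_2\bigl(|h(s,1)|+|h(s,0)|\bigr)^2 + C_3|z|_H^2.
\end{equation*}
Combining this with $|z(i)|^2 \leq |z|_H^2 + 2|z|_H|z_x|_H \leq 2|z|_H^2 + |z_x|_H^2$, the hypothesis $|z|_H \leq r$, and the uniform bound on $h(\cdot, i)$ discussed in the next paragraph, produces a constant $C_r$ with $|z(i)|^2 \leq C_r(1 + |\varphi^s(z)|)$, hence $|z(i)| \leq \sqrt{C_r}\bigl(1 + |\varphi^s(z)|^{1/2}\bigr)$ after the elementary inequality $\sqrt{a+b} \leq \sqrt{a} + \sqrt{b}$.

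The time regularity of the boundary traces comes from the one-dimensional embedding $H^2(0,1) \hookrightarrow C^1[0,1]$: since $p \in W^{1,2}(0,T; H^2(0,1))$, the function $h(\cdot, i) = p_x(\cdot, i)$ lies in $W^{1,2}(0,T) \hookrightarrow C[0,T]$, so it is bounded on $[0,T]$ and satisfies $|h(t,i) - h(s,i)| \leq \int_s^t |h_\tau(\tau, i)|\,d\tau$. I would then set
\begin{equation*}
\beta_r(t) = M_r\int_0^t \bigl(|h_\tau(\tau,1)| + |h_\tau(\tau,0)|\bigr)\, d\tau
\end{equation*}
with $M_r$ chosen in terms of $C_r$ and the upper bound on $|h(\cdot,i)|$; this $\beta_r$ belongs to $W^{1,1}(0,T)$ because $h_\tau(\cdot,i) \in L^2(0,T) \subset L^1(0,T)$. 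Inserting the pointwise bound on $|z(i)|$ into the expression for $\varphi^t(z) - \varphi^s(z)$ and using $x^{1/2} \leq \frac{1}{2}(1+x)$ to absorb $(1+|\varphi^s(z)|^{1/2})$ into $(1+|\varphi^s(z)|)$ then yields the second inequality.

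The only nontrivial step is the extraction of the endpoint estimate $|z(i)|^2 \leq C_r(1+|\varphi^s(z)|)$, but this is essentially a repetition of the absorption argument already carried out in the proof of Lemma \ref{ap_lem_first}, so it should pose no real difficulty. The rest is book-keeping; the compatibility issue at the level of effective domains that normally complicates such time-dependence estimates is avoided here because $D(\varphi^s) = D(\varphi^t) = X$, which is precisely why the trivial choice $z_1 = z$ suffices.
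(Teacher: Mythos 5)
Your proposal is correct and follows essentially the same route as the paper's proof: take $z_1 = z$ and $\alpha_r \equiv 0$ since $D(\varphi^t) = X$ is time-independent, bound the boundary traces $|z(i)|$ by $C_r(1+|\varphi^s(z)|)$ via the absorption argument of \eqref{lem33-eq}, and define $\beta_r$ as an integral of the time derivative of the boundary values of $h$. The only cosmetic difference is that the paper controls $|h(t,i)-h(s,i)|$ through $|h(t)-h(s)|_X$ using Lemma \ref{abs_ineq}, while you invoke the embedding $H^2(0,1)\hookrightarrow C^1[0,1]$ to work with $h(\cdot,i)$ directly; both are valid.
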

\begin{proof}
First, by the assumption $p \in W^{1,2}(0, T; H^2(0,1))$ and $h = \frac{\partial p}{\partial x}$, we see that $h \in W^{1,2}(0, T; X) \subset L^\infty(Q(T))$. We have $D(\varphi^t) = X$ for any $t \in [0,T]$. Let $r > 0, s, \; t \in [0, T], s \leq t, z \in D(\varphi^s), |z|_H \leq r$. Here, we put $z_1 = z$ and $\alpha_r(t) = 0$ for $t \in [0, T]$. Easily, we get \eqref{h2-ineq}.

Also, by using Lemma \ref{abs_ineq} and  \eqref{lem33-eq}, we have
\begin{align*}
|z(i)|
&\leq (|z|^2_H + 2|z|_H|z_x|_H)^{\frac{1}{2}}\\
&\leq \sqrt{2}(r^2 + \frac{1}{2}|z_x|_H^2)^{\frac{1}{2}}\\
&\leq \sqrt{2}(r^2 + |\varphi^s(z)| + |h(t,1)z(1)| + |h(t,0)z(0)|)^{\frac{1}{2}} \;\; {\rm for} \; i = 0,1.
\end{align*}
Hence, we get
\begin{align*}
|z(0)| + |z(1)|
&\leq 2\sqrt{2}(r^2 + |\varphi^s(z)| + |h(t,1)z(1)| + |h(t,0)z(0)|)^{\frac{1}{2}}\\
&\leq 2\sqrt{2}(r + |\varphi^s(z)|^{\frac{1}{2}}) + \frac{1}{2}(|z(0)| + |z(1)|) + 8(|h(t,0)| + |h(t,1)|),
\end{align*}
and easily,
\begin{align*}
|z(0)| + |z(1)|
&\leq 4\sqrt{2}(r + |\varphi^s(z)|^{\frac{1}{2}}) + 16(|h(t,0)| + |h(t,1)|)\\
&\leq 4\sqrt{2}r + 2\sqrt{2}(1+|\varphi^s(z)|) + 32|h|_{L^\infty(Q(T))}.
\end{align*}
By putting $C_r = 4\sqrt{2}r + 2\sqrt{2} + 32|h|_{L^\infty(Q(T))} $, we obtain
\begin{align}
\label{lem4eq}|z(0)| + |z(1)| \leq C_r(|\varphi^s(z)| + 1).
\end{align}
Thus, we see that
\begin{align*}
\varphi^t(z_1) - \varphi^s(z) 
&\leq |z(1)||h(t,1) - h(s,1)| + |z(0)||h(t,0) - h(s,0)|\\
&\leq C_r(|\varphi^s(z)| + 1)(|h(t,1) - h(s,1)| + |h(t,0) - h(s,0)|).
\end{align*}
Thanks to Lemma \ref{abs_ineq}, we have
\begin{align*}
|h(t,i) - h(s,i)| 
&\leq |h(t) - h(s)|_{L^\infty(0, 1)}\\
&\leq |h(t) - h(s)|_H + \sqrt{2}|h(t) - h(s)|_H^{\frac{1}{2}} |h(t) - h(s)|_X^{\frac{1}{2}}\\
&\leq\int^t_s |h_\tau(\tau)|_H d\tau + \frac{\sqrt{2}}{2} (|h(t) - h(s)|_H + |h(t) - h(s)|_X)\\
&\leq \left( 1 + \frac{\sqrt{2}}{2} \right) \int^t_s |h_\tau(\tau)|_H d\tau + \frac{\sqrt{2}}{2} \int^t_s |h_\tau(\tau)|_X d\tau\\
&\leq \left( 1 + \sqrt{2} \right) \int^t_s |h_\tau(\tau)|_X d\tau
 \;\; {\rm for} \; i = 0,1.
\end{align*}

Hence, by putting $\beta_r(t) = \int^t_0 2C_r\left( 1 + \sqrt{2} \right) |h_\tau(\tau)|_X d\tau$ for $t \in [0,T]$, we get
\begin{eqnarray*}
\varphi^t(z_1) - \varphi^s(z)
&\leq& 2C_r(|\varphi^s(z)| + 1) \int^t_s \left( 1 + \sqrt{2} \right) |h_\tau(\tau)|_X d\tau \\
&=& (|\varphi^s(z)| + 1)(\beta_r(t) - \beta_r(s)).
\end{eqnarray*}

Since, the assumption of $p$ implies $\beta_r \in W^{1,1}(0, T)$, Lemma \ref{h2} has been proved.
\end{proof}

Next, we show some properties of $B$ without proofs.
\begin{lemma}\label{db_h}
Under the condition (A1) and (A2), it holds that:
\begin{enumerate}[(1)]
\item $D(B) = H$.
\item There exist positive constants $C_0$ and  $C'_0$ such that $$C_0|Bz - Bz_1|_H^2 \leq (Bz - Bz_1, z - z_1)_H, \; C'_0 |z - z_1|_H \leq |Bz - Bz_1|_H \;\; {\rm for} \; z, z_1 \in H.$$
\item Let $J(z) = \int^1_0 \int^{z(x)}_0 B(\xi) d\xi dx \;\; {\it for} \; z \in H$, then $J$ is finite continuous convex function on $H$ and satisfies $j(0) = 0$ and $B = \partial J$. 
\item If $u_0 \in X$, then $\hat{u}_0 = b(v_0) \in H$ and $B(\hat{u}_0) \in D(\varphi^0)$.\\
\end{enumerate}
\end{lemma}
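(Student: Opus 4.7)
The plan is to reduce each of (1)--(4) to pointwise properties of the scalar map $B : \mathbb{R} \to \mathbb{R}$ and then lift them to $H$ via the standard Nemytskii framework. First I would unpack the composition: since $b = \psi \circ \hat{\lambda}^{-1}$, one has $B = b^{-1} = \hat{\lambda} \circ \psi^{-1}$. Under (A1) and (A2), both $\hat{\lambda}$ and $\psi$ are $C^2$ diffeomorphisms of $\mathbb{R}$ with $\delta_\lambda \leq \hat{\lambda}' \leq C_\lambda$ and $\delta_\psi \leq \psi' \leq C_\psi$, so by the chain rule
\[
\frac{\delta_\lambda}{C_\psi} \leq B'(s) \leq \frac{C_\lambda}{\delta_\psi}, \qquad s \in \mathbb{R},
\]
and hence $B$ is $C^2$, strictly increasing, and globally bi-Lipschitz on $\mathbb{R}$.

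For (1), the linear growth $|B(s)| \leq |B(0)| + (C_\lambda/\delta_\psi)|s|$ immediately implies $Bz \in H$ whenever $z \in H$, so $D(B) = H$. For (2), I would apply the mean value theorem to $B$ between $z(x)$ and $z_1(x)$ to get $B(z) - B(z_1) = B'(\xi)(z - z_1)$ with $B'(\xi) \in [\delta_\lambda/C_\psi,\, C_\lambda/\delta_\psi]$; multiplying by $(z - z_1)$, respectively estimating the absolute value, and integrating over $(0,1)$ produces both inequalities, with $C_0 = \delta_\psi/C_\lambda$ and $C_0' = \delta_\lambda/C_\psi$.

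For (3), I would introduce $j(s) = \int_0^s B(\xi)\,d\xi$. Since $j'' = B' \geq 0$, the function $j$ is convex with $j(0) = 0$, and the bound $|j(s)| \leq |B(0)|\,|s| + (C_\lambda/(2\delta_\psi))s^2$ ensures that $J(z) = \int_0^1 j(z(x))\,dx$ is finite on all of $H$ with $J(0) = 0$. Convexity of $J$ is inherited pointwise from that of $j$, and continuity on $H$ follows from a local Lipschitz estimate for $j$ combined with the Cauchy--Schwarz inequality. To identify $B$ with $\partial J$, pointwise convexity of $j$ gives $j(y(x)) - j(z(x)) \geq B(z(x))(y(x) - z(x))$; integrating yields
\[
J(y) - J(z) \geq (Bz, y - z)_H, \qquad y, z \in H,
\]
showing $Bz \in \partial J(z)$. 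Single-valuedness, and thus equality, then comes from a standard Gâteaux-derivative computation via dominated convergence applied to $(J(z + \varepsilon \eta) - J(z))/\varepsilon$ as $\varepsilon \to 0$, using the uniform quadratic bound on $j$.

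For (4), a direct calculation gives $\hat{u}_0 = b(v_0) = b(\hat{\lambda}(u_0)) = \psi(u_0)$, which lies in $X \subset H$ since $\psi$ is Lipschitz and $u_0 \in X$. Then $B(\hat{u}_0) = \hat{\lambda}(\psi^{-1}(\psi(u_0))) = \hat{\lambda}(u_0) = v_0$, and the Lipschitz continuity of $\hat{\lambda}$ together with $u_0 \in X$ shows that $v_0 \in X = D(\varphi^0)$, where the last equality was noted in the proof of Lemma \ref{h2}. The only step that demands real care is the identification $B = \partial J$ in (3); all remaining assertions reduce immediately to the two-sided bound on $B'$ supplied by (A1) and (A2).
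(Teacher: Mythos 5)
Your proposal is correct: the identification $B=\hat{\lambda}\circ\psi^{-1}$ and the resulting two-sided bound $\delta_\lambda/C_\psi\leq B'\leq C_\lambda/\delta_\psi$ do yield (1), (2) with $C_0=\delta_\psi/C_\lambda$ and $C_0'=\delta_\lambda/C_\psi$, (3) via the primitive $j$ and the G\^ateaux-derivative argument, and (4) via $\hat{u}_0=\psi(u_0)$ and $B(\hat{u}_0)=\hat{\lambda}(u_0)\in X=D(\varphi^0)$. The paper states this lemma explicitly ``without proofs,'' so there is nothing to compare against; your argument is the standard Nemytskii-operator verification the authors evidently had in mind, and it is complete.
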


Moreover, we have:
\begin{lemma}
For any $t \in [0, T], r \geq 0$, the set $E = \{ z \in H; |z| \leq r, |\varphi^t(z)| \leq r \}$ is relatively compact in $H$. 
\end{lemma}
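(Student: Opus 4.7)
The plan is to show that $E$ is bounded in $X = H^1(0,1)$ and then invoke the compact embedding $X \hookrightarrow\hookrightarrow H$ (Rellich–Kondrachov) to conclude relative compactness in $H$.

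First I would bound the $X$-norm of every $z \in E$. Since $|\varphi^t(z)| \leq r$ forces $\varphi^t(z) < \infty$, we already have $z \in X$, so the computation makes sense. Repeating the estimate used in the proof of Lemma 3.3 (inequality \eqref{lem33-eq}), namely applying Lemma \ref{abs_ineq} to the trace terms $h(t,1)z(1) - h(t,0)z(0)$ followed by Young's inequality, I obtain
\begin{align*}
r \geq \varphi^t(z) \geq \frac{1}{2}|z_x|_H^2 - (|h(t,0)|+|h(t,1)|)\bigl(|z|_H^2 + 2|z|_H|z_x|_H\bigr)^{1/2}
\end{align*}
and then
\begin{align*}
r \geq \frac{1}{4}|z_x|_H^2 - \frac{1}{2}(|h(t,0)|+|h(t,1)|)^2 - \frac{3}{2}|z|_H^2.
\end{align*}
Combined with $|z|_H \leq r$ and the fact that $h(t,\cdot)$ is bounded (recall $h = p_x$ with $p \in W^{1,2}(0,T;H^2(0,1))$, so $h \in L^\infty(Q(T))$ by Sobolev embedding), this yields an upper bound $|z_x|_H^2 \leq M(r,t)$ independent of $z \in E$.

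Hence $E$ is a bounded subset of $X$. Since the embedding $X = H^1(0,1) \hookrightarrow H = L^2(0,1)$ is compact by the Rellich–Kondrachov theorem, $E$ is relatively compact in $H$, as required.

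There is no real obstacle here; the argument is essentially a repetition of the coercivity estimate already used to establish lower semicontinuity in Lemma \ref{ap_lem_first}, so the only care needed is to make sure the constant $M(r,t)$ is finite, which is immediate from the assumption $p \in W^{1,2}(0,T;H^2(0,1))$.
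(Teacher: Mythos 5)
Your proof is correct. The paper states this lemma without any proof, and your argument---re-using the coercivity estimate \eqref{lem33-eq} (with $h\in L^{\infty}(Q(T))$, as noted in the proof of Lemma \ref{h2}) to bound $E$ in $X$, then invoking the compact embedding $H^{1}(0,1)\hookrightarrow L^{2}(0,1)$---is precisely the standard route the authors evidently intend.
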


\begin{lemma}\label{ap_lem_last}
If $\tilde{u} \in L^2(0,T; X)$ and $p \in W^{1,2}(0, T; H^2(0,1))$, then $f = \frac{\partial}{\partial x} \left\{ \lambda(\tilde{u})\frac{\partial p}{\partial x} \right\} \in L^2(0,T; H)$.
\end{lemma}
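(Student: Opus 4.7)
The plan is to expand $f$ via the product and chain rules and then bound each term using the assumptions on $\lambda$, Sobolev embedding, and a continuous-in-time inclusion for $p$. Concretely, since $\lambda \in C^2(\mathbb{R})$ and $\tilde u, p$ are regular enough in $x$, one has the pointwise (and a.e.\ in $t$) identity
\begin{align*}
f = \lambda'(\tilde u)\,\frac{\partial \tilde u}{\partial x}\,\frac{\partial p}{\partial x} + \lambda(\tilde u)\,\frac{\partial^2 p}{\partial x^2},
\end{align*}
so the statement reduces to showing that each of these two terms lies in $L^2(0,T;H)$.

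First I would handle the ``easy'' term $\lambda(\tilde u)\,p_{xx}$. By (A2), $|\lambda(\tilde u(t,x))| \leq C_\lambda$, hence $|\lambda(\tilde u(t))\,p_{xx}(t)|_H \leq C_\lambda\,|p_{xx}(t)|_H$. Since $p \in W^{1,2}(0,T; H^2(0,1))$ implies $p \in L^2(0,T; H^2(0,1))$, integrating in $t$ gives this term in $L^2(0,T;H)$.

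Next, for the term $\lambda'(\tilde u)\,\tilde u_x\,p_x$, I would use the boundedness $|\lambda'| \leq C_\lambda$ from (A2) together with the one-dimensional Sobolev embedding $X = H^1(0,1) \hookrightarrow L^\infty(0,1)$ (for instance the bound \eqref{abs_L2partition} in Lemma \ref{abs_ineq}) to estimate
\begin{align*}
\bigl|\lambda'(\tilde u(t))\,\tilde u_x(t)\,p_x(t)\bigr|_H
\leq C_\lambda\,|\tilde u_x(t)|_H\,|p_x(t)|_{L^\infty(0,1)}
\leq C_\lambda\,C\,|\tilde u(t)|_X\,|p(t)|_{H^2(0,1)}.
\end{align*}
The assumption $p \in W^{1,2}(0,T; H^2(0,1))$ yields $p \in C([0,T]; H^2(0,1))$, so $M := \sup_{t \in [0,T]} |p(t)|_{H^2(0,1)}$ is finite. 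Consequently
\begin{align*}
\int_0^T \bigl|\lambda'(\tilde u)\,\tilde u_x\,p_x\bigr|_H^2\,dt
\leq C_\lambda^2 C^2 M^2 \int_0^T |\tilde u(t)|_X^2\,dt < \infty,
\end{align*}
by the hypothesis $\tilde u \in L^2(0,T;X)$. Adding the two contributions proves $f \in L^2(0,T;H)$.

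There is no serious obstacle here; the only point requiring care is the justification of the product-rule expansion for weak derivatives, which is standard given that $\tilde u(t) \in H^1(0,1)$, $p(t) \in H^2(0,1)$, and $\lambda$ is $C^2$ with bounded derivatives, so $\lambda(\tilde u)\,p_x \in H^1(0,1)$ a.e.\ in $t$ with the expected weak $x$-derivative.
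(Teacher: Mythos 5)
Your proof is correct. The paper states Lemma \ref{ap_lem_last} without proof, and your argument --- expanding $f = \lambda'(\tilde u)\tilde u_x p_x + \lambda(\tilde u)p_{xx}$, bounding the second term by $C_\lambda |p_{xx}(t)|_H$ with $p \in L^2(0,T;H^2(0,1))$, and the first term by $C_\lambda |\tilde u_x(t)|_H |p_x|_{L^\infty(Q(T))}$ using $W^{1,2}(0,T;H^2(0,1)) \hookrightarrow C([0,T];H^2(0,1))$, $H^2(0,1) \hookrightarrow C^1([0,1])$ and $\tilde u \in L^2(0,T;X)$ --- is exactly the estimate the paper uses implicitly elsewhere (e.g.\ in the bound for $I_2$ in the proof of Lemma \ref{lem42}), so it fills the omitted proof in the intended way.
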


\vspace{12pt}
\begin{proof}[Proof of Proposition \ref{AP-SS}]
From Lemmas \ref{ap_lem_first} - \ref{ap_lem_last} and \cite[p.46, Theorem 2.8.1]{Kenmochi}, we have a unique solution $\hat{u} \in W^{1,2}(0,T;H)$ satisfying the following:
\begin{align*}
&\hat{u}'(t) + \partial \varphi^t(B(\hat{u}(t))) = f(t) \;\; {\rm for \; a.e.} \; t\in[0,T], \\
&\hat{u}(0) = \hat{u}_0, \; {\rm the \; function} \; t \mapsto \varphi^t(B(u)) \in L^\infty(0, T).
\end{align*}
Moreover, it holds that 
\begin{align}
\label{sec3_con1}\varphi^t(Bu(t)) \;  {\rm is \; differentiable \;  a.e. \; on} \; [0,T], 
\end{align}
and
\begin{align}
\label{sec3_con2}\varphi^t(Bu(t)) - \varphi^s(Bu(s)) \leq \int^t_s \frac{d}{d \tau} \varphi^\tau(Bu(\tau)) d\tau \;\; {\rm for \; any} \; 0 \leq s \leq t \leq T.
\end{align}
By Lemma \ref{b1-5}, we have 
\begin{align*}
\hat{u}'(t) - \frac{\partial^2 B(\hat{u}(t))}{\partial x^2} 
&= f(t) \;\; {\rm a.e.\; on} \; (0,1) \; {\rm for \; a.e.} \; t \in [0,T],\\
\frac{\partial B(\hat{u})(t, i)}{\partial x} + h(t,i) 
&= 0 \;\; {\rm for} \; i = 0,1 \; {\rm and \; a.e.} \; t \in [0,T].
\end{align*}

Hence, by putting $v = B(\hat{u})$ and Lemma \ref{b1-5} we can prove Proposition \ref{AP-SS}.
\end{proof}

\section{Proof of main theorem}
Throughout this section we suppose (A1), (A2), $p \in W^{1,2}(0,T; H^2(0,1))$ and $u_0 \in X$.
We put $\rho = \psi^{-1}, \hat{\rho}(r) = \int^r_{\mathscr{O}} \rho(s) ds$, where $\mathscr{O}$ is a constant satisfying $\rho(\mathscr{O}) = 0$. Clearly, it holds that
\begin{align}
\label{rho-hat} 
\frac{2C_\psi}{\delta_\psi^2} \hat{\rho} (\psi(u)) 
\geq u^2
\; {\rm and} \;
\hat{\rho}(\psi(u))
\leq \frac{C_\psi}{2} (1 + (\hat{\rho}(\mathscr{O}))^2)(u^2 + 1)
 \;\; {\rm for \; any} \; u \in \mathbb{R}.
\end{align}

\begin{lemma} \label{lem42}
For any $\tilde{u} \in L^2(0,T; X)$ let $u$ be a strong solution of AP($\tilde{u}$) on $Q(T)$. Then, there exists a positive constant $C_1$ independent of $\tilde{u}$ such that:
\begin{align*}
|u(t)|^2_H + \int^{t}_0 \left| u_x(\tau) \right|^2_H d\tau \leq C_1\int^{t}_0  |\tilde{u}_x(\tau)|^2_H d\tau + C_1 \;\; {\it for \; any} \; t \in [0, T].
\end{align*}
\end{lemma}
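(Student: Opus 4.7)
The plan is to test the PDE \eqref{AP-eq} with $u$ itself and run a standard energy argument, exploiting the identity
\[
u\cdot\frac{\partial\psi(u)}{\partial t}=\frac{d}{dt}\hat{\rho}(\psi(u))
\]
(since $\hat{\rho}'=\rho=\psi^{-1}$). The left-hand side then becomes $\frac{d}{dt}\int_0^1\hat{\rho}(\psi(u))\,dx$, which by \eqref{rho-hat} is equivalent to $|u|_H^2$ up to multiplicative and additive constants, so the target norm on the left of the lemma appears automatically.

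For the right-hand side I would split the flux into its two pieces $\lambda(u)u_x$ and $\lambda(\tilde{u})p_x$ and treat them asymmetrically. I would integrate by parts only against the diffusion piece, producing the good interior term $-\int_0^1\lambda(u)u_x^2\,dx\le-\delta_{\lambda}|u_x|_H^2$ plus a boundary contribution which reduces to $[-p_x u]_0^1$ after substituting $\lambda(u)u_x=-p_x$ from \eqref{P-BC}. For the forcing piece I would \emph{not} integrate by parts, instead estimating $\int_0^1(\lambda(\tilde{u})p_x)_x u\,dx\le|f|_H|u|_H$ with
\[
|f|_H=|\lambda'(\tilde{u})\tilde{u}_x p_x+\lambda(\tilde{u})p_{xx}|_H\le C|p_x|_{L^\infty(0,1)}|\tilde{u}_x|_H+C|p_{xx}|_H.
\]
The hypothesis $p\in W^{1,2}(0,T;H^2(0,1))\subset L^\infty(0,T;H^2(0,1))$ together with the one-dimensional embedding $H^2(0,1)\hookrightarrow W^{1,\infty}(0,1)$ gives a uniform bound on $|p_x|_{L^\infty(Q(T))}$; this is the step that brings the $|\tilde{u}_x|_H^2$ term on the right of the lemma into the estimate.

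The residual boundary term $[-p_x u]_0^1$ I would control via Lemma \ref{abs_ineq}, writing $|u(i)|^2\le|u|_H^2+2|u|_H|u_x|_H$ and using Young's inequality so that a small multiple of $|u_x|_H^2$ can be absorbed into $\delta_{\lambda}|u_x|_H^2$. Combining everything produces a differential inequality of the form
\[
\frac{d}{dt}\int_0^1\hat{\rho}(\psi(u))\,dx+\tfrac{\delta_{\lambda}}{2}|u_x|_H^2\le C|u|_H^2+C|\tilde{u}_x|_H^2+C|p_{xx}|_H^2+C.
\]
Integrating in time, using \eqref{rho-hat} to pass back and forth between $\int_0^1\hat{\rho}(\psi(u))\,dx$ and $|u|_H^2$, and applying Gronwall's inequality (absorbing $\int_0^T|p_{xx}(\tau)|_H^2\,d\tau<\infty$ and the $u_0$-contribution into the constants) yields the claimed bound with $C_1$ independent of $\tilde{u}$.

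The main obstacle is the boundary contribution produced by the integration by parts: because \eqref{P-BC} is stated only with $p_x$ and not with $\lambda(\tilde{u})p_x$, no clean cancellation occurs at $x=0,1$, and the surviving trace of $u$ must be handled through the one-dimensional trace-type estimate of Lemma \ref{abs_ineq} together with a careful Young's inequality so that the $|u_x|_H$-contribution stays strictly smaller than the $\delta_{\lambda}|u_x|_H^2$ coming from the diffusion. A secondary design choice — choosing not to integrate by parts on the forcing piece — is exactly what makes $|\tilde{u}_x|_H$ (rather than, say, $|\tilde{u}|_{L^\infty}$) appear, matching the form of the estimate stated in the lemma.
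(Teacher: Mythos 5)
Your proposal is correct and follows essentially the same route as the paper: multiply \eqref{AP-eq} by $u$, rewrite the time term as $\frac{d}{dt}\int_0^1\hat{\rho}(\psi(u))\,dx$, integrate by parts only on the diffusion flux (using the boundary condition to replace $\lambda(u)u_x$ by $-p_x$ at $x=0,1$ and absorbing the resulting trace of $u$ via Lemma \ref{abs_ineq} and Young's inequality), expand the forcing term $\partial_x(\lambda(\tilde{u})p_x)$ without integration by parts to produce the $|p_x|_{L^\infty(Q(T))}^2|\tilde{u}_x|_H^2$ contribution, and close with Gronwall's inequality and \eqref{rho-hat}. No gaps.
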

\begin{proof}
By multiplying both sides of \eqref{AP-eq} with $u$ and integrating it with respect to $x$ over $(0, 1)$, we have
\begin{align*}
\int^1_0 u \frac{\partial}{\partial t} \psi(u) dx
&= \int^1_0 u\frac{\partial}{\partial x}\left( \lambda (u) \frac{\partial u}{\partial x} \right) dx 
+ \int^1_0 u\frac{\partial}{\partial x} \left( \lambda (\tilde{u}) \frac{\partial p}{\partial x} \right) dx\\
& =: I_1 + I_2  \;\; {\rm a.e. \; on} \;[0,T].
\end{align*}
Thanks to the definition of $\hat{\rho}$, the left hand side is rewritten as follows:
\[
\int^1_0 u \frac{\partial}{\partial t} \psi(u) dx
= \frac{d}{dt} \int^1_0 \hat{\rho}(\psi(u)) dx  \;\; {\rm a.e. \; on} \;[0,T].
\]

For $I_1$ integration by parts, Young's inequality and Lemma \ref{abs_ineq} imply
\begin{align*}
I_{1}(t) 
&\leq -\delta_\lambda |u_x(t)|_H^2 + |u(t,1)||p_x(t,1)| + |u(t,0)||p_x(t,0)| \\ 
&\leq -\frac{\delta_\lambda}{2} |u_x(t)|_H^2 + \frac{1}{2} ( |p_x(t,1)|^2 + |p_x(t,0)|^2) + \left( \frac{2}{\delta_\lambda} + 1 \right) |u(t)|_H^2 \;\; {\rm for \; a.e} \; t \in [0,T].
\end{align*}

Also, we have
\begin{align*}
I_{2}
&\leq \int^1_0 \left( \left| u \lambda'(\tilde{u}) \frac{\partial \tilde{u}}{\partial x} \frac{\partial p}{\partial x}\right| + \left| u \lambda(\tilde{u}) \frac{\partial^2 p}{\partial x^2} \right| \right) dx\\
&\leq \frac{C_\lambda}{2} \left( 2|u|^2_H + | p_x |_{L^\infty(Q(T))}^2 |\tilde{u}_x|^2_H + |p_{xx}|^2_H \right) \;\; {\rm a.e. \; on} \;[0,T].
\end{align*}
Thus, by \eqref{rho-hat} and putting $v = \psi(u)$ we get
\begin{align*}
\frac{d}{dt} \int^1_0 \hat{\rho}(v) dx + \frac{\delta_\lambda}{2} |u_x|^2_H
&\leq \frac{1}{2}( |p_x(\cdot,1)|^2 + |p_x(\cdot,0)|^2 + C_\lambda|p_{xx}|^2_H)\\
&\quad + \frac{2C_\psi}{\delta_\psi^2}\left( C_\lambda + \frac{2}{\delta_\lambda} + 1 \right) \int^1_0 \hat{\rho}(\psi(u)) dx
+ \frac{C_\lambda}{2} | p_x |_{L^\infty(Q(T))}^2 |\tilde{u}_x|^2_H \\
&= F_1(t) + \hat{C}_1 \int^1_0 \hat{\rho}(\psi(u)) dx + \frac{C_\lambda}{2}|p_x|^2_{L^\infty(Q(T))}|\tilde{u}_x|^2_H \;\; {\rm a.e. \; on} \; [0,T],
\end{align*}
where $F_1 = \frac{1}{2}( |p_x(\cdot,1)|^2 + |p_x(\cdot,0)|^2 + C_\lambda|p_{xx}|^2_H), \hat{C}_1 = \frac{2C_\psi}{\delta_\psi^2}\left( C_\lambda + \frac{2}{\delta_\lambda} + 1 \right)$. 

By using Gronwall's inequality and \eqref{rho-hat}, we  obtain  
\begin{align*}
&\int^1_0 \hat{\rho}(v(t')) dx + \frac{\delta_\lambda}{2} \int^{t'}_0 |u_x(\tau)|^2_H d\tau\\
&\leq e^{\hat{C}_{1}T} \left( \int^{t'}_0 \left( F_1(t) + \frac{C_\lambda}{2}|p_x|^2_{L^\infty(Q(T))} | \tilde{u}_x(t) |_H^2 \right) dt + \int^1_0 \hat{\rho}(v(0)) dx \right) \;\; {\rm for \; any} \; t \in [0, T], 
\end{align*}
\vspace{-6mm}
\begin{align*}
\hat{\rho}(v(0))
\leq \hat{C}_2(|u_0^2 + 1|) \;\; {\rm on } \; (0,1),
\end{align*}
where $\hat{C}_2$ is a positive constant. 

Accordingly, we have
\begin{align*}
&|u(t')|_H^2 +  \int^{t'}_0 |u_x(\tau)|^2_H d\tau\\
&\leq e^{C_{1}T} \frac{C_\lambda}{2} |p_x|^2_{L^\infty(Q(T))} \hat{C}_3 \int^{t'}_0 | \tilde{u}_x(t) |_H^2 dt 
+ e^{\hat{C}_{1}T} \hat{C}_3 \left( \int^{t'}_0 F_1(t) dt + |u_0|_H^2 + 1 \right)\\
&= C_1\int^{t'}_0 | \tilde{u}_x(t) |_H^2 dt  + C_1 \;\; {\rm for \; any}\; t' \in [0,T],
\end{align*}
where $\hat{C}_3 = \max \left\{ \frac{2C_\psi}{\delta_\psi^2}, \frac{2}{\delta_\lambda}, 1, \hat{C}_2 \right\}, C_1 = \max \bigg\{ e^{C_{1}T} \frac{C_\lambda}{2} |p_x|^2_{L^\infty(Q(T))} \hat{C}_3,\\ e^{\hat{C}_{1}T}\hat{C}_3 \left( \int^T_0 F_1(t) dt + |u_0|_H^2 + 1 \right) \bigg\}$. Thus we prove this lemma.
\end{proof}

\begin{lemma}\label{lem43}
Under the same assumption as in Lemma \ref{lem42}, there exists positive constant $C_2$ independent of $\tilde{u}$ such that
\begin{align*}
|u_x(t)|^2_H + \int^{t}_0 \left| u_\tau(\tau) \right|^2_H d\tau \leq C_2 \int^{t}_0  |\tilde{u}_x(\tau)|^2_H d\tau + C_2 \;\; {\it for \; any} \; t \in [0, T].
\end{align*}
\end{lemma}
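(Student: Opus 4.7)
The plan is to derive an energy identity by testing the equation against $u_t$. Because the boundary condition in AP($\tilde u$) mixes $u$ and $\tilde u$ in a way that does not pair cleanly with $u_t$, it is cleanest to pass to $v := \hat\lambda(u)$, as already introduced for $\overline{\mathrm{AP}}(\tilde u)$: the problem becomes $b(v)_t = v_{xx} + f$ on $Q(T)$ with the decoupled boundary condition $v_x + h = 0$ at $x = 0, 1$, where $b = \psi\circ\hat\lambda^{-1}$, $h = p_x$, and $f = \partial_x(\lambda(\tilde u) p_x)$. The $\tilde u$-dependence is now confined to a zero-order source, and the boundary condition involves only the given datum $p$.

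Multiplying by $v_t$ and integrating on $(0,1)$, using $v_x = -h$ at the endpoints, and then rewriting $-[v_t h]_0^1 = -\partial_t[v h]_0^1 + [v h_t]_0^1$, one integrates on $[0,t]$ to obtain
\[
\int_0^t\!\!\int_0^1 b'(v) v_\tau^2\,dx\,d\tau + \tfrac{1}{2}|v_x(t)|_H^2 + [v(t)h(t)]_0^1 = E_0 + \int_0^t [v h_\tau]_0^1\,d\tau + \int_0^t\!\!\int_0^1 f v_\tau\,dx\,d\tau,
\]
with $E_0$ collecting the initial data. On the left-hand side one has $b'(v)v_\tau^2 = \psi'(u)\lambda(u)u_\tau^2 \ge \delta_\psi \delta_\lambda u_\tau^2$ and $|v_x|_H \ge \delta_\lambda |u_x|_H$, so these two terms will give, after conversion, the target $\int_0^t |u_\tau|_H^2$ and $|u_x(t)|_H^2$ respectively.

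The right-hand side is estimated term by term. Each trace $|v(\tau,i)|$ is controlled via Lemma \ref{abs_ineq} by $|v|_H + \sqrt{2|v|_H |v_x|_H}$, and Young's inequality then splits the boundary contributions into (i) an absorbable small multiple of $|v_x(t)|_H^2$, (ii) time integrals of $|v|_H^2$ and $|v_x|_H^2$ dominated by $C_\lambda^2\bigl(\int_0^t |u|_H^2 + \int_0^t |u_x|_H^2\bigr)$ and hence by Lemma \ref{lem42}, and (iii) integrals of $|h(\tau,i)|$ and $|h_\tau(\tau,i)|$ in time, which are finite by $p \in W^{1,2}(0,T;H^2(0,1))$ together with $H^1 \hookrightarrow L^\infty$. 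The source term is handled by Young as $\int_0^t\!\int f v_\tau\,dx\,d\tau \le \epsilon\int_0^t |v_\tau|_H^2 + C_\epsilon \int_0^t |f|_H^2$ with $|f|_H \le C_\lambda(|p_x|_{L^\infty(Q(T))}|\tilde u_x|_H + |p_{xx}|_H)$, which delivers exactly the $\int_0^t |\tilde u_x|_H^2$ appearing in the target inequality.

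The main obstacle is the bookkeeping in step (i): the constants from the Young inequalities must be chosen so that every occurrence of $|v_x(t)|_H^2$ on the right has total coefficient strictly less than $\tfrac{1}{2}$, and every occurrence of $|v_\tau|_H^2$ has total coefficient strictly less than $\delta_\psi\delta_\lambda/C_\lambda$. Crucially, because Lemma \ref{lem42} already controls $\int_0^t |u_x|_H^2$ (hence $\int_0^t |v_x|_H^2$) by $C_1\int_0^t |\tilde u_x|_H^2 + C_1$, no Gronwall argument is needed. Converting back through $|u_x|_H \le \delta_\lambda^{-1}|v_x|_H$ and $|u_\tau|_H \le \delta_\lambda^{-1}|v_\tau|_H$ then closes the estimate in the form stated in Lemma \ref{lem43}.
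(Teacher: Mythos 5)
Your proposal follows essentially the same route as the paper's proof: both test the equation, rewritten through $v=\hat{\lambda}(u)$, with $\partial_t \hat{\lambda}(u)$, integrate by parts, shift the time derivative onto the boundary product $v\,h$, control the traces by Lemma \ref{abs_ineq}, and feed in Lemma \ref{lem42} for the lower-order terms. The only difference is organizational --- the paper packages the boundary contributions into $\varphi^t(\hat{\lambda}(u))$ and applies Gronwall, whereas you absorb the time-integrated $|v_x|_H^2$ terms directly via Lemma \ref{lem42}; this is a valid minor streamlining, not a different method.
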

\begin{proof}
First, by using $\hat{\lambda}$ we see that
\begin{align}
\label{hat-lambda}\frac{\partial \psi(u)}{\partial t} 
&= \frac{\partial^2}{\partial x^2} \hat{\lambda}(u) + \frac{\partial}{\partial x} \left( \lambda(\tilde{u})\frac{\partial p}{\partial x} \right) \;\; {\rm in} \; Q(T).
\end{align}
Multiply it by $\frac{\partial \hat{\lambda}(u)}{\partial t}$ and integrate it with respect to $x$ over $[0, 1]$, then we have
\begin{align}
\int^1_0 \frac{\partial \psi(u)}{\partial t} \frac{\partial \hat{\lambda}(u)}{\partial t} dx 
&= \int^1_0 \frac{\partial}{\partial x} \left\{ \frac{\partial \hat{\lambda}(u)}{\partial x} \right\} \frac{\partial \hat{\lambda}(u)}{\partial t} dx
 + \int^1_0 \frac{\partial}{\partial x} \left\{ \lambda(\tilde{u})\frac{\partial p}{\partial x} \right\} \frac{\partial \hat{\lambda}(u)}{\partial t} dx \nonumber \\
&\label{lem43eq}=: I_3 + I_4 \;\; {\rm a.e. \; on} \;[0,T].
\end{align}
It is easy to see that
\[
\int^1_0 \frac{\partial \psi(u)}{\partial t} \frac{\partial \hat{\lambda}(u)}{\partial t} dx \geq \delta_\psi \delta_\lambda |u_t|^2_H \;\; {\rm a.e. \; on} \;[0,T].
\]
By elementary calculation, (A2) and \eqref{sec3_con1}, we obtain
\begin{align*}
I_{3} 
&\leq \frac{d}{dt} \left\{ -\frac{1}{2} \int^1_0 \left( \frac{\partial \hat{\lambda}(u)}{\partial x} \right)^2 dx
 - \frac{\partial p(\cdot,1)}{\partial x} \hat{\lambda}(u(\cdot,1)) + \frac{\partial p(\cdot,0)}{\partial x} \hat{\lambda}(u(\cdot,0)) \right\} \\
&\quad + \frac{1}{2\delta_\lambda^2}(|\hat{\lambda}(u(t,1))|^2 + |\hat{\lambda}(u(t,0))|^2) + \frac{C_\lambda^2}{2} \left( \left| \frac{\partial}{\partial t} \left( \frac{\partial p(\cdot,1)}{\partial x} \right) \right|^2 + \left| \frac{\partial}{\partial t} \left( \frac{\partial p(\cdot,0)}{\partial x} \right) \right|^2 \right) \\
&\hspace{130mm} {\rm a.e. \; on} \;[0,T]
\end{align*}
By applying Young's inequality to $I_4$, it holds that
\begin{align*}
I_4
&\leq \frac{\delta_\psi \delta_\lambda}{2} |u_t|_H^2  
+ \frac{1}{2 \delta_\psi \delta_\lambda}\int^1_0 \left( \left( \lambda'(\tilde{u}) \cdot \frac{\partial \tilde{u}}{\partial x} \cdot \frac{\partial p}{\partial x} + \lambda(\tilde{u})\frac{\partial^2 p}{\partial x^2} \right) \cdot \lambda(u) \right)^2 dx\\
&\leq \frac{\delta_\psi \delta_\lambda}{2} |u_t|_H^2  
+ \frac{2C_\lambda^4}{2 \delta_\psi \delta_\lambda}\int^1_0 \left\{ \left( \frac{\partial \tilde{u}}{\partial x} \cdot \frac{\partial p}{\partial x}  \right)^2 +\left( \frac{\partial^2 p}{\partial x^2} \right)^2 \right\} dx\\
&\leq \frac{\delta_\psi \delta_\lambda}{2} |u_t|_H^2  
+ \frac{2C_\lambda^4}{2 \delta_\psi \delta_\lambda}|p_x|_{L^\infty(Q(T))}^2|\tilde{u}_x|^2_H + \frac{2C_\lambda^4}{2 \delta_\psi \delta_\lambda} |p_{xx}|_H^2 \;\; {\rm a.e. \; on} \; [0,T].
\end{align*}
Obviously, $\hat{\lambda}(u) \in X$ a.e. on $[0,T]$. Accordingly, we can apply Lemma \ref{abs_ineq} to $\hat{\lambda}(u)$ and show that
\[
|\hat{\lambda}(u(t, i))|^2 \leq 2|\hat{\lambda}(u)|_H^2 + |(\hat{\lambda}(u))_x|_H^2 \;\; {\rm for \; a.e.} \; t \in [ 0, T ].
\]
Therefore, from \eqref{lem43eq} and these inequalities, it follows that
\begin{align}
\label{lem42eq1}&\frac{\delta_\psi \delta_\lambda}{2} |u_t|_H^2 
+ \frac{d}{dt} \left\{ \frac{1}{2} \int^1_0 \left( \frac{\partial \hat{\lambda}(u)}{\partial x} \right)^2 dx \nonumber
+ \frac{\partial p(\cdot,1)}{\partial x} \cdot \hat{\lambda}(u(\cdot,1)) 
- \frac{\partial p(\cdot,0)}{\partial x} \cdot \hat{\lambda}(u(\cdot,0))\right\}\\
&\leq \frac{1}{\delta_\lambda^2}(2|\hat{\lambda}(u)|_H^2  + |(\hat{\lambda}(u))_x|_H^2) 
+ \frac{2C_\lambda^4}{2 \delta_\psi \delta_\lambda} |p_{xx}|_H^2
+ \frac{2C_\lambda^4}{2 \delta_\psi \delta_\lambda}|p_x|_{L^\infty(Q(T))}^2|\tilde{u}_x|^2_H \nonumber \\
&\quad+ \frac{C_\lambda^2}{2} \left( \left| \frac{\partial}{\partial t} \left( \frac{\partial p(\cdot,1)}{\partial x} \right) \right|^2 + \left| \frac{\partial}{\partial t} \left( \frac{\partial p(\cdot,0)}{\partial x} \right) \right|^2 \right) \\
& =: \frac{1}{\delta_\lambda^2}(2|\hat{\lambda}(u)|_H^2  + |(\hat{\lambda}(u))_x|_H^2) 
+ \frac{2C_\lambda^4}{2 \delta_\psi \delta_\lambda}|p_x|_{L^\infty(Q(T))}|\tilde{u}_x|^2_H
+ F_2 \;\; {\rm a.e. \; on} \;[0,T]. \nonumber
\end{align}
By the assumption for $p$ and Lemma \ref{abs_ineq}, we have
\begin{align*}
&\left| \frac{\partial p(\cdot, i)}{\partial x} \hat{\lambda}(u(t, i)) \right|
\leq |p_x|_{L^\infty(Q(T))} \left( \sqrt{2}|\hat{\lambda}(u(t))|_H + \left| \frac{\partial}{\partial x} \hat{\lambda}(u(t)) \right|_H \right)\\
&\hspace{100mm}{\rm for} \; i = 0, 1 \; {\rm and \; a.e.} \; t \in [0, T].
\end{align*} 
Due to this inequality and the definition of $\varphi^t$, we obtain 
\begin{align*}
\varphi^t(\hat{\lambda}(u(t)))
&= \frac{1}{2} \int^1_0 \left| \frac{\partial}{\partial x} (\hat{\lambda} (u(t))) \right|^2 dx + \frac{\partial p(t, 1)}{\partial x}\hat{\lambda} (u(t, 1))
- \frac{\partial p(t, 0)}{\partial x}\hat{\lambda} (u(t, 0))\\
&\geq \frac{1}{2} \int^1_0 \left| \frac{\partial}{\partial x} (\hat{\lambda} (u(t))) \right|^2 dx
- 2|p_x|_{L^\infty(Q(T)} \left( \sqrt{2} |\hat{\lambda}(u(t))|_H +  \left| \frac{\partial}{\partial x} \hat{\lambda}(u(t)) \right|_H \right)\\
&\geq \frac{1}{4} \int^1_0 \left| \frac{\partial}{\partial x} (\hat{\lambda} (u(t))) \right|^2 dx
- 2\sqrt{2} |p_x|_{L^\infty(Q(T)} |\hat{\lambda}(u(t))|_H
- 4 |p_x|_{L^\infty(Q(T)}^2\\
&\geq - 2\sqrt{2} |p_x|_{L^\infty(Q(T)} |\hat{\lambda}(u(t))|_H
- 4 |p_x|_{L^\infty(Q(T)}^2 \;\; {\rm for \; a.e.} \; t \in [0,T]. 
\end{align*}
Here, thanks to Lemma \ref{lem42} and \eqref{func_ineq}, we have $|\hat{\lambda}(u)|_{L^\infty(0,T; H)} < + \infty$. Accordingly, from \eqref{lem42eq1} it follows that
\begin{align*}
&\frac{\delta_\psi \delta_\lambda}{2} |u_t|_H^2
+ \frac{d}{dt} (\varphi^t(\hat{\lambda}(u))) + 2\sqrt{2} |p_x|_{L^\infty(Q(T)} |\hat{\lambda}(u(t))|_H
+ 4 |p_x|_{L^\infty(Q(T)}^2\\
&\leq \hat{C}_4 (|\hat{\lambda}(u)|_H^2 + \varphi^t(\hat{\lambda}(u)) + 2\sqrt{2} |p_x|_{L^\infty(Q(T)} |\hat{\lambda}(u(t))|_H
+ 4 |p_x|_{L^\infty(Q(T)}^2\\
&\quad + \hat{C}_4 (|p_x|_{L^\infty(Q(T))}^2 + |p_x|_{L^\infty(Q(T))}|\tilde{u}_x|_H + F_2) \;\; {\rm a.e. \; on} \; [0, T],
\end{align*}
where $\hat{C}_4$ is a positive constant independent of $\tilde{u}$. By applying Gronwall's inequality, we infer that 
\begin{align*}
&\frac{\delta_\psi \delta_\lambda}{2} \int^t_0 |u_\tau|_H^2 d\tau 
+ \varphi^t(\hat{\lambda}(u)) + 2|p_x|_{L^\infty(Q(T))} |\hat{\lambda}(u)|_{L^\infty(0,T; H)}
+ 8 |p_x|_{L^\infty(Q(T))}^2\\
&\leq e^{\hat{C}_4 t} (\varphi^0(\hat{\lambda}(u)) + 2|p_x|_{L^\infty(Q(T))} |\hat{\lambda}(u)|_{L^\infty(0,T; H)} + 8 |p_x|_{L^\infty(Q(T))}^2)\\
&\quad + \hat{C}_4 e^{\hat{C}_4 t} \int^t_0 (|\hat{\lambda}(u)|_H + 2|p_x|_{L^\infty(Q(T))} |\hat{\lambda}(u)|_{L^\infty(0,T; H)} + 8 |p_x|_{L^\infty(Q(T))}^2) d\tau \\
&\quad + \hat{C}_4 e^{\hat{C}_4 t} \int^t_0 (|p_x|_{L^\infty(Q(T))}^2 + |p_x|_{L^\infty(Q(T))}|\tilde{u}_x|_H + F_2) d\tau \;\; {\rm for} \; t \in [0, T].
\end{align*}
Due to \eqref{lem33-eq} and Lemma \ref{lem42}, we can easily prove this lemma.
\end{proof}

Let $\Gamma_T : L^2(0,T; X) \to L^2(0,T; X)$ be the solution operator for AP($\tilde{u}$), namely, $\Gamma_T(\tilde{u}) = u$ for $\tilde{u} \in L^2(0, T; X)$. Also, for any $T' \in (0 ,T], M > 0$, we put
\[
K(T', M) = \{ z \in L^2(0, T; X) ; |z_x(t)|_H^2 \leq M \; \mathrm{for \; a.e.} \; t \in (0, T') \}.
\]
The following lemma concerned with $\Gamma_{T'}$ and $K(T', M)$ holds.
\begin{lemma}\label{KTM}
Under the same assumption as in Lemma \ref{lem42}, there exists $T' \in (0, T]$ and $M > 0$ such that $\Gamma_{T'}: K(T', M) \to K(T', M)$.
\end{lemma}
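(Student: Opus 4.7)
The plan is to read off the claim directly from the a priori estimate of Lemma~\ref{lem43}. Given any $\tilde{u} \in K(T', M)$, the defining bound $|\tilde{u}_x(t)|_H^2 \leq M$ a.e.\ on $(0, T')$ immediately gives
$$\int_0^t |\tilde{u}_x(\tau)|_H^2 \, d\tau \leq MT' \quad \text{for every } t \in [0, T'].$$
Since $\tilde{u} \in L^2(0, T; X)$, Proposition~\ref{AP-SS} provides a unique strong solution $u = \Gamma_{T'}(\tilde{u})$ on $Q(T)$, and Lemma~\ref{lem43} applied at any $t \in [0, T']$ yields
$$|u_x(t)|_H^2 \leq C_2 \int_0^t |\tilde{u}_x(\tau)|_H^2 \, d\tau + C_2 \leq C_2 M T' + C_2,$$
where $C_2$ is independent of $\tilde{u}$.

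To conclude $u \in K(T', M)$ I need the right-hand side to be bounded by $M$, i.e.\ $C_2(MT' + 1) \leq M$. This is a straightforward two-parameter choice: first fix $M$ large relative to $C_2$, then shrink $T'$ accordingly. Explicitly, take
$$M := 2C_2, \qquad T' := \min\!\left\{T, \; \frac{1}{2C_2}\right\}.$$
Then $C_2 T' \leq 1/2$, and
$$C_2 M T' + C_2 \leq \tfrac{M}{2} + C_2 = \tfrac{M}{2} + \tfrac{M}{2} = M.$$
Hence $|u_x(t)|_H^2 \leq M$ for a.e.\ $t \in (0, T')$, which is exactly the membership condition for $K(T', M)$.

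I do not expect any genuine obstacle. The only subtlety worth flagging is that the constant $C_2$ from Lemma~\ref{lem43} absorbs Gronwall factors depending on $T$ and on $|p_x|_{L^\infty(Q(T))}$, but is independent of $\tilde{u}$; so the order of choices must be (i) fix $T$ and thereby fix $C_2$, (ii) pick $M$ from $C_2$, (iii) shrink $T'$ against $M$ and $C_2$. With this ordering the estimate closes without circularity, and the map $\Gamma_{T'}$ is seen to preserve $K(T', M)$, as required.
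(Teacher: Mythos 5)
Your argument is correct and is essentially identical to the paper's proof: both apply Lemma \ref{lem43} to a $\tilde{u}\in K(T',M)$, bound $\int_0^t|\tilde{u}_x|_H^2\,d\tau$ by $MT'$, and close the estimate with the same choices $M=2C_2$ and $T'\leq\min\{T,\tfrac{1}{2C_2}\}$. No differences worth noting.
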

\begin{proof}
Let $M \geq 2C_2$ and $0 < T' \leq \min{\left\{ \frac{1}{2C_2} , T \right\}}$, where $C_2$ is the constant given in Lemma \ref{lem43}. For $\tilde{u} \in L^2(0,T; X)$, let $u = \Gamma_T(\tilde{u})$. By Lemma \ref{lem43}, we have
\begin{align*}
\int^1_0 |u_x(t)|^2 dx 
&\leq C_2 + C_2 T' M\\
&\leq \frac{M}{2} + \frac{M}{2}\\
&= M \;\; {\rm for \; any} \; t \in [0, T'].
\end{align*}
Thus we get $u \in K(T', M)$. This is a conclusion of this lemma.
\end{proof}

Moreover, by Lemma \ref{lem43} we get some uniform estimate for $u = \Gamma_{T'}(\tilde{u})$.
\begin{lemma}\label{2esti}
Assume the same condition as in Lemma \ref{lem42} and let $T'$ and $M$ be constants defined by Lemma \ref{KTM}. Then there exist positive constants $C_3$ and $C_4$ such that for any $\tilde{u} \in K(T', M), u = \Gamma_{T'} (\tilde{u})$ satisfies
\begin{align}
\label{cor41eq1}\int^{T'}_0 |u_x|^4_{L^4(0,1)} dt \leq C_3, \; \int^{T'}_0 |u_{xx}|^2_{H} dt \leq C_4.
\end{align}
\end{lemma}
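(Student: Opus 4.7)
The plan is to close a coupled pair of inequalities between $A := \int_0^{T'}|u_x|_{L^4(0,1)}^4\,dt$ and $B := \int_0^{T'}|u_{xx}|_H^2\,dt$, and then decouple them via a Young-type absorption. One inequality comes from the PDE itself after expanding the diffusion; the other from the one-dimensional interpolation inequality \eqref{abs_L4partition} applied to $u_x$.

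First, the assumption $\tilde u\in K(T',M)$ gives $\int_0^{T'}|\tilde u_x|_H^2\,d\tau\leq MT'$, so Lemma \ref{lem43} supplies a constant $R$, independent of $\tilde u$, with
\begin{equation*}
\sup_{t\in[0,T']}|u_x(t)|_H^2 \leq R \quad\text{and}\quad \int_0^{T'}|u_t(\tau)|_H^2\,d\tau \leq R.
\end{equation*}
Expanding $\partial_x(\lambda(u)u_x+\lambda(\tilde u)p_x)$ in \eqref{AP-eq} and solving for $u_{xx}$ yields
\begin{equation*}
\lambda(u)u_{xx} = \psi'(u)u_t-\lambda'(u)u_x^2-\lambda'(\tilde u)\tilde u_x p_x-\lambda(\tilde u)p_{xx}.
\end{equation*}
Dividing by $\lambda(u)\geq\delta_\lambda$, squaring, invoking (A1)--(A2), and integrating over $Q(T')$ produce the first inequality
\begin{equation*}
B \leq C_a+C_b A,
\end{equation*}
where $C_a$ gathers the already-bounded quantities $\int_0^{T'}|u_t|_H^2\,dt$, $MT'|p_x|_{L^\infty(Q(T))}^2$ and $\int_0^{T'}|p_{xx}|_H^2\,dt$, while $C_b=4C_\lambda^2/\delta_\lambda^2$ depends only on the structural constants.

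Since Proposition \ref{AP-SS} already gives $u\in L^2(0,T';H^2(0,1))$, we have $u_x(t)\in X$ for a.e.\ $t$, so Lemma \ref{abs_ineq} applied to $u_x$, together with $|u_x|_H^2\leq R$, gives $|u_x(t)|_{L^\infty(0,1)}^2\leq R+2\sqrt{R}\,|u_{xx}(t)|_H$. Multiplying by $|u_x(t)|_H^2\leq R$, integrating in time, and using Cauchy--Schwarz yields the second inequality
\begin{equation*}
A \leq R^2 T'+2R^{3/2}\sqrt{T'}\sqrt{B}.
\end{equation*}

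To close the argument, apply Young's inequality $2R^{3/2}\sqrt{T'}\sqrt{B}\leq\varepsilon B+\varepsilon^{-1}R^3T'$, substitute $B\leq C_a+C_bA$, and fix $\varepsilon=1/(2C_b)$, so that the coefficient $(1-\varepsilon C_b)=1/2$ absorbs $A$ on the left and delivers $A\leq C_3$; then $B\leq C_a+C_b C_3=:C_4$, which is the desired estimate. The main obstacle is precisely this coupling: expanding the diffusion produces a quartic term $u_x^4$ inside the $L^2$ bound on $u_{xx}$, while any one-dimensional interpolation that controls $u_x$ in $L^\infty$ necessarily reintroduces $|u_{xx}|_H$; the Young absorption is what decouples the two estimates without requiring further smallness of $T'$ beyond the value already fixed in Lemma \ref{KTM}.
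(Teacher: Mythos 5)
Your argument is correct, but it follows a genuinely different route from the paper. The paper first observes that in the transformed equation \eqref{hat-lambda} the full flux divergence is exactly $\partial_x^2\hat\lambda(u)$, so Lemma \ref{lem43} immediately yields $\int_0^{T'}|(\hat\lambda(u))_{xx}|_H^2\,dt\leq C$ \emph{without} any quartic term ever appearing; it then obtains the $L^4$ bound on $u_x$ by integrating $\int_0^1(\partial_x\hat\lambda(u))^4\,dx$ by parts, controlling the resulting boundary terms via the boundary condition $\partial_x\hat\lambda(u)=-p_x$ at $x=0,1$ and absorbing the interior term by Young's inequality against the already-known bound on $(\hat\lambda(u))_{xx}$; only afterwards does it return to the expanded equation to bound $\int|u_{xx}|_H^2$. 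You instead work directly with $u$, accept the quartic term $\lambda'(u)u_x^2$ in the $u_{xx}$ estimate, and close the resulting coupled system $B\leq C_a+C_bA$, $A\leq R^2T'+2R^{3/2}\sqrt{T'}\sqrt{B}$ by Young absorption --- crucially using the $L^\infty$ form \eqref{abs_L2partition} of the interpolation (which produces only $\sqrt{B}$, hence an absorbable $\varepsilon B$) rather than \eqref{abs_L4partition} applied to $u_x$ (which would produce a non-small multiple of $B$ and break the absorption). Your route avoids the change of variables and the boundary-term bookkeeping entirely, at the cost of needing the qualitative finiteness $B<\infty$ to legitimize the absorption; you correctly supply this from Proposition \ref{AP-SS}, which gives $u\in L^2(0,T';H^2(0,1))$. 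The paper's route buys the $L^4$ bound unconditionally (no absorption in $A$ needed) and showcases the structural role of $\hat\lambda$, which it reuses elsewhere; both yield constants independent of $\tilde u$, as required.
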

\begin{proof}
For any $\tilde{u} \in K(T', M)$, let $u = \Gamma_T(\tilde{u})$. In this proof positive constants independent of $\tilde{u}$ are written as $C$. By \eqref{hat-lambda} and Lemma \ref{lem43}, we have
\[
\int^{t'}_0 |(\hat{\lambda}(u))_{xx}|_H^2 dt \leq C \;\; {\rm for} \; t \in [0, T'].
\]
Also, by Lemmas \ref{abs_ineq}, \ref{lem42} and \ref{lem43}, we get
\begin{align*}
|\hat{\lambda}(u(t, x))| \leq C \;\; {\rm for} \; (t, x) \in Q(T'),
\end{align*}
and
\begin{align*}
\left| \left( \frac{\partial \hat{\lambda}(u(t, i))}{\partial x} \right)^3 \hat{\lambda}(u(t, i))  \right|
\leq \left| {\left(-\frac{\partial p(t, i)}{\partial x} \right)^3} \hat{\lambda}(u(t, i)) \right| \leq C \;\; {\rm for} \; i = 0, 1 \;{\rm and} \; t \in [0, T']. 
\end{align*}
From these inequalities, we obtain
\begin{align*}
\int^1_0 \left( \frac{\partial \hat{\lambda}(u)}{\partial x} \right)^4 dx
&= -\int^1_0 \frac{\partial}{\partial x} \left( \frac{\partial \hat{\lambda}(u)}{\partial x} \right)^3 \cdot \hat{\lambda}(u) dx + \left[ \left( \frac{\partial \hat{\lambda}(u)}{\partial x} \right)^3 \cdot \hat{\lambda}(u) \right]^1_0\\
&\leq \frac{1}{2}\int^1_0  \left( \frac{\partial \hat{\lambda}(u)}{\partial x} \right)^4 dx
+ C \int^1_0  \left| \frac{\partial^2 \hat{\lambda}(u)}{\partial x^2} \right|^2  dx + C
 \;\; {\rm a.e. \; on} \; [0, T]
\end{align*}
Therefore, because of $\left| \frac{\partial \hat{\lambda}(u)}{\partial x} \right| = \left| \lambda(u) \frac{\partial u}{\partial x} \right| \geq \delta_\lambda \left| \frac{\partial u}{\partial x} \right|$, we can get the first inequality of \eqref{cor41eq1}. 

Next, from \eqref{AP-eq} we can easily set
\begin{align*}
\lambda(u) u_{xx} = \frac{\partial}{\partial t} \psi(u) - \frac{\partial \lambda(u)}{\partial x} \frac{\partial u}{\partial x} - \frac{\partial \lambda(\tilde{u})}{\partial x} \frac{\partial p}{\partial x} - \lambda(\tilde{u}) \frac{\partial^2 p}{\partial x^2} \;\; {\rm in} \; Q(T).
\end{align*}
Accordingly, by (A1), (A2) and the first inequality of \eqref{cor41eq1} we have
\begin{align*}
\int^t_0 |u_{xx}|_H^2 d\tau
&\leq C \left(\int^t_0 |u_\tau|_H^2 d\tau
+\int^t_0 \int^1_0 \left| \lambda'(u) \left( u_x \right)^2 \right|^2 dx d\tau
+ \int^t_0 |\tilde{u}_x|_H^2 d\tau
+ \int^t_0 |p_{xx}|_H^2 d\tau \right)\\
&\leq C_4 \;\; {\rm for \; any} \; t \in [0, T],
\end{align*}
where $C_4$ is a positive constant independent of $\tilde{u}$. Thus, we have the second inequality of \eqref{cor41eq1}.
\end{proof}

The next two lemmas are necessary to apply the Banach fixed point theorem to $\Gamma_{T'}$.
\begin{lemma}\label{lem45}
Under the same assumptions as in Lemma \ref{2esti}, there exists a positive constant $C_5$ satisfying the following estimate:
\begin{align}
\label{lem45-eq}|(u_1 - u_2)(t)|^2_H\leq C_5 \int^{t}_{0} |\tilde{u}_1 - \tilde{u}_2|^2_X d\tau \;\; {\it for \; any } \; t \in [0, T'] \; {\it and} \; \tilde{u}_i \in K(M, T'),
\end{align}
where $u_i = \Gamma_{T'}(\tilde{u}_i)$ for $i = 1, 2$.
\end{lemma}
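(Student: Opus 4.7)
My plan is to subtract the equations AP($\tilde{u}_1$) and AP($\tilde{u}_2$) and test the difference against $w := u_1-u_2$. Writing $W := \psi(u_1)-\psi(u_2)$, $V := \hat{\lambda}(u_1)-\hat{\lambda}(u_2)$ and $F := [\lambda(\tilde{u}_1)-\lambda(\tilde{u}_2)]\,p_x$, the difference equation is $W_t = V_{xx} + F_x$ on $Q(T')$ with $w(0,\cdot)=0$. The crucial observation is that subtracting the two AP boundary conditions kills $p_x$, leaving the homogeneous identity $V_x=0$ at $x=0,1$. Multiplying by $w$, integrating over $(0,1)$ and integrating by parts then gives
\begin{equation*}
\int_0^1 w\,W_t\,dx + \int_0^1 V_x w_x\,dx = -\int_0^1 F\,w_x\,dx + [wF]_0^1,
\end{equation*}
with no boundary term carrying $V_x$.

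For the time term I use the pointwise representation $W = a\,w$ with $a(t,x) := \int_0^1 \psi'(su_1+(1-s)u_2)\,ds\in[\delta_\psi,C_\psi]$, so that
\begin{equation*}
\int_0^1 w\,W_t\,dx = \frac{1}{2}\frac{d}{dt}\int_0^1 a\,w^2\,dx + \frac{1}{2}\int_0^1 a_t w^2\,dx.
\end{equation*}
The first piece controls $|w|_H^2$ up to the constant $\delta_\psi$, and the $a_t$-error is bounded by $|a_t|\leq C_\psi(|u_{1,t}|+|u_{2,t}|)$; combining with Lemma \ref{abs_ineq} and the uniform $L^2(0,T';H)$-bound on $u_{i,t}$ from Lemma \ref{lem43} produces (after Young) a term $\phi(t)|w|_H^2$ with $\phi\in L^1(0,T')$ plus a small multiple of $|w_x|_H^2$. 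For the diffusion term I decompose $V_x=\lambda(u_1)w_x+[\lambda(u_1)-\lambda(u_2)]u_{2,x}$, which yields $\int V_x w_x\,dx\geq \delta_\lambda|w_x|_H^2 - C_\lambda\int |w||w_x||u_{2,x}|\,dx$; the cross integral is controlled via Young's inequality, Lemma \ref{abs_ineq}, and the uniform bound $|u_{2,x}|_H^2\leq M$ coming from $\tilde{u}_2 \in K(T',M)$ (Lemma \ref{KTM}).

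The right-hand side is handled as follows. By the mean value theorem $|F|_H\leq C_\lambda|p_x|_{L^\infty(Q(T))}|\tilde{u}_1-\tilde{u}_2|_H$, so $|\int F w_x\,dx|\leq \varepsilon|w_x|_H^2+C|\tilde{u}_1-\tilde{u}_2|_H^2$. For the boundary trace, Lemma \ref{abs_ineq} converts $|[wF]_0^1|\leq 2C_\lambda|p_x|_{L^\infty(Q(T))}|w|_{L^\infty}|\tilde{u}_1-\tilde{u}_2|_{L^\infty}$ into $|w|_H,|w_x|_H$ and $|\tilde{u}_1-\tilde{u}_2|_H,|(\tilde{u}_1-\tilde{u}_2)_x|_H$, after which Young gives a bound of the form $\varepsilon|w_x|_H^2+C(|w|_H^2+|\tilde{u}_1-\tilde{u}_2|_X^2)$. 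Absorbing every $\varepsilon|w_x|_H^2$ into $\gamma|w_x|_H^2 := \tfrac{1}{2}\delta_\lambda|w_x|_H^2$ on the left, we arrive at
\begin{equation*}
\frac{d}{dt}|w|_H^2 + \gamma|w_x|_H^2 \leq \phi(t)|w|_H^2 + C|\tilde{u}_1-\tilde{u}_2|_X^2 \;\; {\rm a.e. \; on} \; [0,T'],
\end{equation*}
with $\phi\in L^1(0,T')$. Since $w(0)=0$, Gronwall's lemma immediately yields \eqref{lem45-eq}.

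The main obstacle will be the simultaneous handling of the $a_t$-correction on the left and of the $\lambda$-cross term $[\lambda(u_1)-\lambda(u_2)]u_{2,x}$ in the diffusion: both mix $w$ (or $w_x$) with derivatives of $u_i$, and in order to close the estimate one must combine the Sobolev-type bounds of Lemma \ref{abs_ineq} with the uniform a priori controls $|u_{i,t}|_{L^2(0,T';H)}$, $|u_{i,x}|_{L^\infty(0,T';H)}$ and $|u_{i,x}|_{L^4(L^4)}$ supplied by Lemmas \ref{lem42}, \ref{lem43} and \ref{2esti}, together with the defining bound $|u_{2,x}|_H^2\leq M$ of $K(T',M)$.
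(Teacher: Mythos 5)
Your proposal is correct and follows essentially the same route as the paper: subtract the two auxiliary equations, test with $u_1-u_2$, use the boundary condition to eliminate the $V_x$ boundary term, split the diffusion term as $\lambda(u_1)w_x+[\lambda(u_1)-\lambda(u_2)]u_{2x}$, control the cross terms via Lemma \ref{abs_ineq}, the $W^{1,2}(0,T';H)$ bounds of Lemma \ref{lem43} and the $K(T',M)$ bound, and close with Gronwall. The only (cosmetic) difference is that you write the time term with the averaged weight $a=\int_0^1\psi'(su_1+(1-s)u_2)\,ds$, whereas the paper weights by $\psi'(u_1)$ and carries $(\psi'(u_1)-\psi'(u_2))u_{2t}$ as a separate error term; both yield the same $L^1$-in-time Gronwall coefficient.
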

\begin{proof}
From \eqref{AP-eq}, we have
\begin{align}
\label{sub-est}\frac{\partial \psi(u_1) }{\partial t} - \frac{\partial \psi(u_2)}{\partial t} = \frac{\partial}{\partial x}\left( \lambda (u_1) u_{1x}- \lambda (u_2)u_{2x} \right) + \frac{\partial}{\partial x}\left( \lambda (\tilde{u}_1) p_x - \lambda (\tilde{u}_2) p_x \right) \;\; {\rm in} \; Q(T).
\end{align}
Here, we put $u = u_1 - u_2, \tilde{u} = \tilde{u}_1 - \tilde{u}_2$. Multiply \eqref{sub-est} by $u$ and integrate it, then we get
\begin{align*}
&\int^1_0 u \psi'(u_1) u_t dx
+ \int^1_0 u u_{2t} (\psi'(u_1) - \psi'(u_2)) dx\\
&= \int^1_0 u \frac{\partial}{\partial x}\left( \lambda (u_1) u_{1x}- \lambda (u_2) u_{2x} \right)dx 
+ \int^1_0 u\frac{\partial}{\partial x}\left( \lambda (\tilde{u}_1) p_x - \lambda (\tilde{u}_2) p_x \right) dx \;\; {\rm a.e. \; on} \; [0, T].
\end{align*}
For the first term on the left side and the first term on the right side, (A1), (A2) and elementary calculation guarantee
\begin{align*}
\int^1_0 u \psi'(u_1) u_t dx
&= \frac{1}{2}\int^1_0 \psi'(u_1) (u^2)_t dx\\
&= \frac{1}{2}\int^1_0 \left( \frac{\partial}{\partial t}\left( \psi'(u_1) u^2 \right) - \frac{\partial \psi'(u_1)}{\partial t}u^2 \right)dx\\
&= \frac{1}{2}\frac{d}{dt} \int^1_0 \psi'(u_1) u^2 dx
- \frac{1}{2} \int^1_0 \psi''(u_1) u_{1t} u^2 dx, 
\end{align*}
\begin{align*}
&\int^1_0 u \frac{\partial}{\partial x}\left( \lambda (u_1) u_{1x}- \lambda (u_2) u_{2x} \right)dx\\
&= -\int^1_0 u_x \left( \lambda (u_1) u_{1x} - \lambda (u_2) u_{2x}\right) dx\\
&= -\int^1_0 (u_x)^2 \lambda(u_1) dx
-\int^1_0 u_x u_{2x} \left( \lambda (u_1) - \lambda (u_2)\right) dx\\
&\leq -\delta_\lambda\int^1_0  (u_x)^2 dx
-\int^1_0 u_x u_{2x} \left( \lambda (u_1) - \lambda (u_2)\right) dx \;\; {\rm a.e. \; on} \; [0, T].\\ 
\end{align*}
From these inequalities, we have
\begin{align} \label{las2lem-eq1}
&\frac{1}{2}\frac{d}{dt} \int^1_0 \psi'(u_1) u^2 dx
+ \delta_\lambda \int^1_0 (u_x)^2 dx \nonumber \\
&\leq \frac{1}{2} \int^1_0 \psi''(u_1) u_{1t} u^2 dx
-\int^1_0 u u_{2t} (\psi'(u_1) - \psi'(u_2)) dx \nonumber \\
&-\int^1_0 u_x u_{2x} \left( \lambda (u_1) - \lambda (u_2)\right) dx
+ \int^1_0 u \frac{\partial}{\partial x}\left( \lambda (\tilde{u}_1) p_x - \lambda (\tilde{u}_2) p_x \right) dx \nonumber \\
&=: \sum^8_{i = 5} I_i \;\; {\rm a.e. \; on} \; [0, T].
\end{align}
By H\"older's inequality, Lemmas \ref{abs_ineq} and \ref{lem43}, we get
\begin{align*}
I_{5}
&\leq \frac{1}{2} \int^1_0 |\psi''(u_1)| | u_{1t}| |u|^2 dx\\
&\leq \frac{C_\psi}{2} \left\{ \int^1_0 \left| u_{1t} \right|^2 dx \right\}^{\frac{1}{2}}\left\{ \int^1_0 |u|^4 dx \right\}^{\frac{1}{2}}\\
&\leq \frac{C_\psi}{2}|u_{1t}|_H(|u|_H^2 + 2|u|_H|u_x|_H)\\
&\leq \left( \frac{C_\psi}{2} + \frac{C_\psi^2}{\delta_\lambda}|u_{1t}|_H\right)|u_{1t}|_H|u|_H^2 + \frac{\delta_\lambda}{4}|u_x|_H^2,\\
\stepcounter{equation}\tag{\theequation}
I_{6}
&\leq \int^1_0 |u| \left| u_{2t} \right|| \psi'(u_1) -\psi'(u_2)| dx\\
&\leq C_\psi \left\{ \int^1_0 |u|^4 dx \right\}^{\frac{1}{2}} \left\{ \int^1_0 \left| u_{2t} \right|^2 dx \right\}^{\frac{1}{2}}\\
&\leq C_\psi |u_{2t}|_H(|u|_H^2 + 2|u|_H|u_x|_H)\\
&\leq \left( C_\psi + \frac{4C_\psi^2}{\delta_\lambda}|u_{2t}|_H \right)|u_{2t}|_H|u|_H^2 + \frac{\delta_\lambda}{4}|u_x|_H^2,\\
\stepcounter{equation}\tag{\theequation}
I_{7}
&\leq \int^1_0 |u_x| |u_{2x}| |\lambda (u_1) - \lambda (u_2)| dx\\
&\leq \left\{ \int^1_0 |u_x|^2 dx\right\}^{\frac{1}{2}} \left\{ \int^1_0 |u_{2x}|^2 \left| \lambda (u_1) - \lambda (u_2)\right|^2 dx\right\}^{\frac{1}{2}}\\
&\leq \frac{\delta_\lambda}{8}\int^1_0 |u_x|^2 dx
+ \frac{2C_\lambda^2}{\delta_\lambda}\int^1_0 \left|\frac{\partial u_2}{\partial x}\right|^2 | u_1 - u_2 |^2 dx\\
&\leq \frac{\delta_\lambda}{8}|u_x|_H^2
+ \frac{2C_\lambda^2}{\delta_\lambda}(|u|_H^2 + 2|u|_H|u_x|_H)|u_{2x}|_H^2\\
&\leq \frac{\delta_\lambda}{8}|u_x|_H^2
+ \frac{2C_\lambda^2}{\delta_\lambda}( C_{2}+C_{2}MT')(|u|_H^2 + 2|u|_H|u_x|_H)\\
&\leq \frac{\delta_\lambda}{4}|u_x|_H^2
+ \left(
\frac{2C_\lambda^2}{\delta_\lambda}( C_{2}+C_{2}MT')
+ \frac{2}{\delta_\lambda}\left( \frac{2C_\lambda^2}{\delta_\lambda} \cdot 2 ( C_{2}+C_{2}MT') \right)^2
\right) |u|_H^2 \\
&= \frac{\delta_\lambda}{4}|u_x|_H^2
+ \hat{C}_5|u|_H^2 \;\; {\rm a.e. \; on} \; [0, T],
\stepcounter{equation}\tag{\theequation}
\end{align*}
where $\hat{C}_5$ is a suitable positive constant.
Also, applying integration by parts to $I_8$ yields
\begin{align*}
I_{8}
&= -\int^1_0 u_x \left( \lambda (\tilde{u}_1) p_x - \lambda (\tilde{u}_2) p_x \right) dx
+ u(\cdot, 1) p_x(\cdot, 1) \left( \lambda (\tilde{u}_1(\cdot, 1))  - \lambda (\tilde{u}_2(\cdot, 1)) \right)\\
&\quad - u(\cdot, 0) p_x(\cdot, 0) \left( \lambda (\tilde{u}_1(\cdot, 0))  - \lambda (\tilde{u}_2(\cdot, 0)) \right)\\
&\leq C_\lambda \left| p_x \right|_{L^\infty(Q(T))} \int^1_0 |u_x||\tilde{u}| dx
+ u(\cdot, 1) p_x(\cdot, 1) \left( \lambda (\tilde{u}_1(\cdot, 1))  - \lambda (\tilde{u}_2(\cdot, 1)) \right)\\
&\quad - u(\cdot, 0) p_x(\cdot, 0) \left( \lambda (\tilde{u}_1(\cdot, 0))  - \lambda (\tilde{u}_2(\cdot, 0)) \right)\\
&\leq C_\lambda \left| p_x \right|_{L^\infty(Q(T))}
(|u_x|_H|\tilde{u}|_H + |u(\cdot, 1)||\tilde{u}(\cdot, 1)| 
+ |u(\cdot, 0)||\tilde{u}(\cdot, 0)|) \; {\rm a.e. \;on} \; [0, T].
\end{align*}
For $i = 0, 1$, by using \eqref{abs_ineq}, we get
\begin{align*}
&|u(t, i)||\tilde{u}(t, i)|\\
&\leq (|u(t)|_H^2 + 2|u(t)|_H|u_x(t)|_H)^{\frac{1}{2}}(|\tilde{u}(t)|_H^2 + 2|\tilde{u}(t)|_H|\tilde{u}_x(t)|_H)^{\frac{1}{2}}\\
&\leq (|u(t)|_H + \sqrt{2}|u(t)|_H^{\frac{1}{2}}|u_x(t)|_H^{\frac{1}{2}})(|\tilde{u}(t)|_H + \sqrt{2}|\tilde{u}(t)|_H^{\frac{1}{2}}|\tilde{u}_x(t)|_H^{\frac{1}{2}})\\
&=|u(t)|_H|\tilde{u}(t)|_H + 2|u(t)|_H^{\frac{1}{2}}|u_x(t)|_H^{\frac{1}{2}}|\tilde{u}(t)|_H^{\frac{1}{2}}|\tilde{u}_x(t)|_H^{\frac{1}{2}}
+ \sqrt{2}|u(t)|_H|\tilde{u}(t)|_H^{\frac{1}{2}}|\tilde{u}_x(t)|_H^{\frac{1}{2}}\\
&\quad + \sqrt{2}|\tilde{u}(t)|_H|u(t)|_H^{\frac{1}{2}}|u_x(t)|_H^{\frac{1}{2}} \; {\rm for \; a.e.} \; t \in [0,T'].
\end{align*}
By Young's inequality, it follows that
\begin{align}
I_{8}(t)
&\leq 4 C_\lambda |p_x|_{L^\infty(Q(T))} (|u_x(t)|_H |\tilde{u}(t)|_H + |u(t)|_H |\tilde{u}(t)|_H + |u(t)|_H |\tilde{u}_x(t)|_H) \nonumber\\
&\leq \frac{3\delta_\lambda}{32}|u_x(t)|_H^2
+ \left(\frac{8 \cdot 4^2}{3 \delta_\lambda}C_\lambda \left| p_x \right|_{L^\infty(Q(T))}
+ \frac{4}{2} \right) C_\lambda \left| p_x \right|_{L^\infty(Q(T))} |\tilde{u}(t)|_H^2 \nonumber\\
&\quad + 4C_\lambda \left| p_x \right|_{L^\infty(Q(T))} |u(t)|_H^2
+ 2C_\lambda \left| p_x \right|_{L^\infty(Q(T))} |\tilde{u}_x(t)|_H^2 \nonumber\\
&\label{las2lem-eq2}\leq \frac{3 \delta_\lambda}{32}|u_x(t)|_H^2
+\hat{C}_6(|u(t)|_H^2 
+ |\tilde{u}(t)|_H^2 + |\tilde{u}_x(t)|_H^2) \;\; {\rm for} \; t \in [0, T'],
\end{align}
where
\[
\hat{C}_6 = \left( \frac{8 \cdot 4^2}{3 \delta_\lambda}C_\lambda \left| p_x \right|_{L^\infty(Q(T))} + 4 \right) C_\lambda \left| p_x \right|_{L^\infty(Q(T))}.
\]
From \eqref{las2lem-eq1} - \eqref{las2lem-eq2}, we have
\begin{align*}
&\frac{1}{2}\frac{d}{dt} \int^1_0 \psi'(u_1) (u_1 - u_2)^2 dx
+ \frac{5\delta_\lambda}{32}\int^1_0  \left( \frac{\partial u_1}{\partial x} - \frac{\partial u_2}{\partial x}\right)^2 dx\\
&\leq \left(
\left( \frac{C_\psi}{2} + \frac{C_\psi^2}{\delta_\lambda}|u_{1t}|_H\right)|u_{1t}|_H
+\left( C_\psi + \frac{4C_\psi^2}{\delta_\lambda}|u_{2t}|_H \right)|u_{2t}|_H
+ \hat{C}_5 + \hat{C}_6
\right)|u|_H^2\\
&\quad +\hat{C}_6|\tilde{u}|_H^2 
+\hat{C}_6|\tilde{u}_x|_H^2\\
&\leq \left(
 \frac{C_\psi^2}{8} + \frac{1}{2}|u_{1t}|_H^2 + \frac{C_\psi^2}{\delta_\lambda}|u_{1t}|_H^2
+ \frac{C_\psi^2}{2} + \frac{1}{2}|u_{2t}|_H^2 + \frac{4C_\psi^2}{\delta_\lambda}|u_{2t}|_H^2
+ \hat{C}_5 + \hat{C}_6
\right)|u|_H^2\\
&\quad +\hat{C}_6|\tilde{u}|_H^2 
+\hat{C}_6|\tilde{u}_x|_H^2\\
&\leq \left(
\left(\frac{1}{2} + \frac{4C_\psi^2}{\delta_\lambda} \right)
\left( |u_{1t}|_H^2 + |u_{2t}|_H^2 \right) + \frac{C_\psi^2}{8} + \frac{C_\psi^2}{2} + \hat{C}_5 + \hat{C}_6
\right)|u|_H^2\\
&\quad +\hat{C}_6|\tilde{u}|_H^2 
+\hat{C}_6|\tilde{u}_x|_H^2 \;\;{\rm a.e. \; on} \; [0, T'].
\end{align*}
Hence, there exists a positive constant 
$\hat{C}_7$ satisfying
\begin{align*}
&\frac{1}{2}\frac{d}{dt} \int^1_0 \left( \psi'(u_1) (u)^2 \right) dx
+ \frac{5\delta_\lambda}{32} |u_x|^2_H\\
&\hspace{30mm}\leq\hat{C}_7(|u_{1t}|_H^2 + |u_{2t}|_H^2 +1 ) |u|_H^2
+ \hat{C}_7|\tilde{u}_1 - \tilde{u}_2|_X^2 
\;{\rm a.e. \; on} \; [0, T'].
\end{align*}
By Gronwall's inequality, we have
\begin{align*}
\frac{\delta_\psi}{2}  \int^1_0 (u(t))^2  dx
&\leq \exp{ \left\{ \frac{2 \hat{C}_7}{\delta_\psi} \left(\int^t_0 |u_{1\tau}|_H^2 d\tau + \int^t_0 |u_{2\tau}|_H^2 d\tau + t \right) \right\} } \hat{C}_7 |\tilde{u}|_X^2\\
&\hspace{87.5mm}{\rm for \; any} \; t \in [0, T']. \nonumber
\end{align*}
From Lemma \ref{lem43} and the definition of $K(T', M)$, \eqref{lem45-eq} is shown.
\end{proof}

\begin{lemma}\label{lem46}
Under the same assumption as in Lemma \ref{2esti}, there exists a positive constant $C_6$ satisfying
\begin{align}\label{diff-eq2}
|(u_1 - u_2)_x(t)|^2_H \leq C_6\int^{t}_0 |\tilde{u}_1 - \tilde{u}_2|^2_X d\tau \;\; {\it for \; any} \; t \in [0, T'] \; {\it and} \; \tilde{u}_i \in K(M, T'),
\end{align}
where $u_i = \Gamma_{T'}(\tilde{u}_i)$ for $i = 1, 2$, $M$ and $T'$ are positive constants defined in Lemma {\rm \ref{KTM}}.
\end{lemma}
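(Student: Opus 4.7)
The plan is to mimic the energy argument of Lemma \ref{lem43}: subtract the two instances of \eqref{AP-eq} satisfied by $u_1=\Gamma_{T'}(\tilde u_1)$ and $u_2=\Gamma_{T'}(\tilde u_2)$, and test the resulting identity against the ``$\hat\lambda$--form'' time derivative $(\hat\lambda(u_1)-\hat\lambda(u_2))_t=\lambda(u_1)u_{1t}-\lambda(u_2)u_{2t}$. Writing $\xi=u_1-u_2$ and $\tilde\xi=\tilde u_1-\tilde u_2$, the difference equation reads
\begin{align*}
(\psi(u_1)-\psi(u_2))_t=(\hat\lambda(u_1)-\hat\lambda(u_2))_{xx}+\bigl((\lambda(\tilde u_1)-\lambda(\tilde u_2))p_x\bigr)_x,
\end{align*}
and the key structural observation is that $(\hat\lambda(u_i))_x=\lambda(u_i)u_{ix}=-p_x$ at $x=0,1$ by \eqref{P-BC}, so $(\hat\lambda(u_1)-\hat\lambda(u_2))_x\equiv 0$ on the boundary for a.e.\ $t$. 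Integration by parts on the diffusive term therefore produces $-\frac{1}{2}\frac{d}{dt}|(\hat\lambda(u_1)-\hat\lambda(u_2))_x|_H^2$ with no boundary remainder.

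For the left-hand side I would decompose
\begin{align*}
\psi'(u_1)u_{1t}-\psi'(u_2)u_{2t}=\psi'(u_1)\xi_t+(\psi'(u_1)-\psi'(u_2))u_{2t},
\end{align*}
and analogously for $\lambda$. Multiplying out produces the coercive leading contribution $\int_0^1\psi'(u_1)\lambda(u_1)\xi_t^2\,dx\ge\delta_\psi\delta_\lambda|\xi_t|_H^2$, plus cross-terms controlled through (A1), (A2) and Lemma \ref{abs_ineq} by $C|\xi|_{L^\infty}|u_{2t}|_H|\xi_t|_H+C|\xi|_{L^\infty}^2|u_{2t}|_H^2$ with $|\xi|_{L^\infty}^2\le C(|\xi|_H^2+|\xi_x|_H^2)$. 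For the forcing, expanding $((\lambda(\tilde u_1)-\lambda(\tilde u_2))p_x)_x$ and invoking $|\tilde u_{2x}|_H^2\le M$ from $\tilde u_2\in K(T',M)$ together with $X\hookrightarrow L^\infty$ yields $|((\lambda(\tilde u_1)-\lambda(\tilde u_2))p_x)_x|_H\le C_M(1+|p_{xx}|_H)|\tilde\xi|_X$; combined with $|(\hat\lambda(u_1)-\hat\lambda(u_2))_t|_H\le C_\lambda|\xi_t|_H+C|\xi|_{L^\infty}|u_{2t}|_H$ and Young's inequality, this is absorbed into $\frac{1}{2}\delta_\psi\delta_\lambda|\xi_t|_H^2$ plus lower-order remainders.

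Setting $G(t):=|(\hat\lambda(u_1(t))-\hat\lambda(u_2(t)))_x|_H^2$, the outcome is a differential inequality of the form
\begin{align*}
\frac{d}{dt}G(t)+\frac{1}{2}\delta_\psi\delta_\lambda|\xi_t(t)|_H^2\le C\,a(t)\bigl(G(t)+|\xi(t)|_H^2+|\tilde\xi(t)|_X^2\bigr),
\end{align*}
where $a(t):=1+|p_{xx}(t)|_H^2+|u_{1t}(t)|_H^2+|u_{2t}(t)|_H^2\in L^1(0,T')$ by the hypothesis on $p$ and Lemma \ref{lem43}. To recover $|\xi_x|_H^2$ from $G$, the identity $\lambda(u_1)\xi_x=(\hat\lambda(u_1)-\hat\lambda(u_2))_x-(\lambda(u_1)-\lambda(u_2))u_{2x}$, combined with $|u_{2x}|_H^2\le M$ and Lemma \ref{abs_ineq} via a small-$\varepsilon$ Young argument, yields $|\xi_x|_H^2\le C(G+|\xi|_H^2)$. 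Lemma \ref{lem45} supplies $|\xi(t)|_H^2\le C_5\int_0^t|\tilde\xi|_X^2\,d\tau$, and since $u_1(0)=u_2(0)=u_0$ we have $G(0)=0$; Gronwall's inequality then gives $G(t)\le C\int_0^t|\tilde\xi|_X^2\,d\tau$, whence \eqref{diff-eq2}.

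The main obstacle is the bookkeeping in the second step: the cross-terms $(\psi'(u_1)-\psi'(u_2))u_{2t}$, $(\lambda(u_1)-\lambda(u_2))u_{2t}$ and the forcing cross-term all carry weights that are only $L^1$ in $t$ (via $|u_{it}|_H^2$), so one must check carefully that they multiply exclusively the lower-order quantities $G+|\xi|_H^2+|\tilde\xi|_X^2$ and that the coefficient of $|\xi_t|_H^2$ stays strictly below $\delta_\psi\delta_\lambda$; only under this balance can the integrable weights be absorbed by Gronwall without destroying the linear dependence on $\int_0^t|\tilde\xi|_X^2\,d\tau$.
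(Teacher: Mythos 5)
Your proposal is correct and follows essentially the same route as the paper: subtract the two copies of \eqref{AP-eq}, test against $(\hat\lambda(u_1)-\hat\lambda(u_2))_t$, exploit that $(\hat\lambda(u_1)-\hat\lambda(u_2))_x$ vanishes at $x=0,1$ so the diffusive term yields $\frac{d}{dt}\frac{1}{2}|(\hat\lambda(u_1)-\hat\lambda(u_2))_x|_H^2$ with no boundary remainder, close the Gronwall loop using Lemma \ref{lem45} and the $L^1$-in-time weights $|u_{it}|_H^2$ from Lemma \ref{lem43}, and finally convert the bound on $|(\hat\lambda(u_1)-\hat\lambda(u_2))_x|_H$ back to $|(u_1-u_2)_x|_H$. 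The one step you assert without proof --- that $t\mapsto\frac{1}{2}|(\hat\lambda(u_1)-\hat\lambda(u_2))_x(t)|_H^2$ is a.e.\ differentiable and satisfies the integrated form of the resulting differential inequality --- is where the paper spends most of its effort (via backward difference quotients in time and the Lebesgue density point theorem), but this is a technical supplement to, not a deviation from, your argument.
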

\begin{proof}
Let $v_1 = \hat{\lambda}(u_1), v_2 = \hat{\lambda}(u_2)$. 
By \eqref{sub-est} we get
\begin{align}
&\psi'(u_1)(u_{1t} - u_{2t}) + (\psi'(u_1) - \psi'(u_2))u_{2t} \nonumber\\
&= (v_1 - v_2)_{xx}
+ \lambda'(\tilde{u}_1)\tilde{u}_{1x}p_x
+ \lambda(\tilde{u_1}) p_{xx}
- \lambda'(\tilde{u}_2)\tilde{u}_{2x}p_x
- \lambda(\tilde{u_2}) p_{xx}  \nonumber\\
&= (v_1 - v_2)_{xx}
+ \lambda'(\tilde{u}_1) (\tilde{u}_1 - \tilde{u}_2)_x p_x
+ (\lambda'(\tilde{u}_1) - \lambda'(\tilde{u}_2)) \tilde{u}_{2x} p_x  \nonumber\\
&\label{laslem-eq0}\quad +(\lambda(\tilde{u}_1) - \lambda(\tilde{u}_2)) p_{xx} \;\; {\rm in} \; Q(T).
\end{align}
Here, for simplicity we put $v = v_1 - v_2, u = u_1 - u_2$ and $\tilde{u} = \tilde{u}_1 - \tilde{u}_2$.

First, we show that the function $t \;\mapsto \; \frac{1}{2} |v_x(t)|_H^2$ is differentiable a.e. on $[0, T]$ and
\begin{align}\label{laslem-eq1}
\frac{1}{2} |v_x(t)|_H^2 - \frac{1}{2} |v_x(s)|_H^2
\leq \int^t_s \frac{d}{d\tau} \left( \frac{1}{2} |v_x(t)|_H^2 \right) d\tau \;\; {\rm for} \; 0 \leq s \leq t \leq T.
\end{align}
In order to prove \eqref{laslem-eq1}, we put $v(t) = 0$ and $u(t) = 0$ for $t < 0$. Let $\Delta t > 0$ and multiply \eqref{laslem-eq0} by $\frac{v(t) - v(t - \Delta t)}{\Delta t}$, then we see that
\begin{align}\label{laslem-eq2}
&\int^1_0 \psi'(u_1(t))u_t(t)\frac{v(t) - v(t - \Delta t)}{\Delta t} dx - \int^1_0 v_{xx}(t) \frac{v(t) - v(t - \Delta t)}{\Delta t} dx \nonumber\\
&= \int^1_0 \lambda'(\tilde{u}_1(t)) \tilde{u}_x(t) p_x(t) \frac{v(t) - v(t - \Delta t)}{\Delta t} dx \nonumber\\
&\quad + \int^1_0 (\lambda'(\tilde{u}_1(t)) - \lambda'(\tilde{u}_2(t))) \tilde{u}_{2x} p_x \frac{v(t) - v(t - \Delta t)}{\Delta t} dx \nonumber\\
&\quad +\int^1_0(\lambda(\tilde{u}_1(t)) - \lambda(\tilde{u}_2(t))) p_{xx}(t) \frac{v(t) - v(t - \Delta t)}{\Delta t} dx \nonumber\\
&\quad - \int^1_0 (\psi'(u_1(t)) - \psi'(u_2(t)))u_{2t}(t) \frac{v(t) - v(t - \Delta t)}{\Delta t} dx \nonumber\\
& =: \sum^{12}_{i = 9} I_i(t) \;\; {\rm for \; a.e.} \; t \in [0, T].
\end{align}
By elementary calculations and the definition of $K(T', M)$, we have
\begin{align}
\label{laslem-eq3}I_9(t)
&\leq C_\lambda |p_x|_{L^\infty(Q(T))} |\tilde{u}_x(t)|_H \left| \frac{v(t) - v(t - \Delta t)}{\Delta t} \right|_H,\\
\label{laslem-eq4}I_{10}(t)
&\leq C_\lambda |p_x|_{L^\infty(Q(T))} |\tilde{u}(t)|_{L^\infty(0,1)} |\tilde{u}_{2x}(t)|_H \left| \frac{v(t) - v(t - \Delta t)}{\Delta t} \right|_H \nonumber\\
&\leq C_\lambda \sqrt{M} |p_x|_{L^\infty(Q(T))} |\tilde{u}(t)|_{L^\infty(0,1)} \left| \frac{v(t) - v(t - \Delta t)}{\Delta t} \right|_H,\\
\label{laslem-eq5}I_{11}(t)
&\leq C_\lambda |\tilde{u}(t)|_{L^\infty(0,1)} |p_{xx}(t)|_H \left| \frac{v(t) - v(t - \Delta t)}{\Delta t} \right|_H \nonumber\\
&\leq C_\lambda |p_{xx}|_{L^\infty(0,T; H)} |\tilde{u}(t)|_{L^\infty(0,1)} \left| \frac{v(t) - v(t - \Delta t)}{\Delta t} \right|_H,\\
\label{laslem-eq6}I_{12}(t)
&\leq C_\psi |u(t)|_{L^\infty(0,1)} |u_{2t}(t)|_H \left| \frac{v(t) - v(t - \Delta t)}{\Delta t} \right|_H \;\; {\rm for \; a.e.} \; t \in [0, T].
\end{align}
Also, we see that
\begin{align}\label{laslem-eq7}
&-\int^1_0 v_{xx}(t) \frac{v(t) - v(t - \Delta t)}{\Delta t} dx \nonumber\\
&= \int^1_0 \frac{(v_x(t))^2 - v_x(t)v_x(t - \Delta t)}{\Delta t} dx \nonumber\\
&\geq \frac{1}{2\Delta t}\left( \int^1_0 |v_x(t)|^2 dx - \int^1_0 |v_x(t - \Delta t)|^2 dx \right) \;\; {\rm for} \; 0 \leq t \leq T.
\end{align}
From \eqref{laslem-eq7}, these inequalities and (A1), we have
\begin{align}\label{laslem-eq8}
&\frac{1}{2\Delta t}\left( \int^1_0 |v_x(t)|^2 dx - \int^1_0 |v_x(t - \Delta t)|^2 dx \right) \nonumber\\
&\leq \hat{C}_8 (|\tilde{u}_x(t)|_H + |\tilde{u}(t)|_{L^\infty(0,1)} + |u(t)|_{L^\infty(0,1)}|u_{2t}(t)|_H + |u_t(t)|_H)\left| \frac{v(t) - v(t - \Delta t)}{\Delta t} \right|_H \\
&\hspace*{130mm}{\rm for} \; 0 \leq t \leq T, \nonumber
\end{align}
where $\hat{C}_8 = C_\lambda|p_x|_{L^\infty(Q(T))} + C_\lambda \sqrt{M} |p_x|_{L^\infty(Q(T))} + C_\lambda|p_{xx}|_{L^\infty(0,T; H)} + C_\psi$.

By integrating it with respect to $t$ over $[0, t_1]$ for $0 \leq t_1 \leq T$, we obtain
\begin{align*}
&\frac{1}{2 \Delta t} \left( \int^{t'}_0 \int^1_0 |v_x(t)|^2 dx dt - \int^{t'}_0 \int^1_0 |v_x(t - \Delta t)|^2 dx dt\right)\\
&\leq \hat{C}_8 \int^{t'}_0 F_3(\tau) \left| \frac{v(\tau) - v(\tau - \Delta t)}{\Delta t} \right|_H d\tau \;\; {\rm for} \; 0 \leq t' \leq T,
\end{align*}
where
\[
F_3(t) = |\tilde{u}_x(t)|_H + |\tilde{u}(t)|_{L^\infty(0,1)} + |u(t)|_{L^\infty(0,1)}|u_{2t}(t)|_H + |u_t(t)|_H \;\; {\rm for} \; 0 \leq t \leq T.
\]
Because of $\tilde{u} \in K(T', M)$,  Lemmas \ref{lem42} and \ref{lem43}, we have $\tilde{u}, u \in L^\infty(Q(T))$. This implies that $F_3 \in L^2(0, T)$. Also, it is clear that
\begin{align}\label{laslem-eq9}
\frac{v - v(\cdot - \Delta t)}{\Delta t} \to v_t \;\; {\rm in} \; L^2(0,T; H) \; {\rm as} \; \Delta t \to 0.
\end{align}
Moreover, we infer that
\begin{align*}
&\frac{1}{2 \Delta t} \left( \int^{t'}_0 \int^1_0 |v_x(t)|^2 dx dt - \int^{t'}_0 \int^1_0 |v_x(t - \Delta t)|^2 dx dt \right)\\
&= \frac{1}{2 \Delta t} \int^{t'}_{t' - \Delta t} |v_x(t)|_H^2 dt - \frac{1}{2 \Delta t} \int^0_{- \Delta t} |v_x(t)|_H^2 dt\\
&= \frac{1}{2 \Delta t} \int^{t'}_{t' - \Delta t} |v_x(t)|_H^2 dt - \frac{1}{2} |v_x(0)|_H^2 \;\; {\rm for} \; 0 \leq t' \leq T.
\end{align*}
Hence, we obtain
\begin{align}\label{laslem-eq10}
&\frac{1}{2 \Delta t} \int^{t'}_{t' - \Delta t} |v_x(t)|_H^2 dt - \frac{1}{2} |v_x(0)|_H^2 \nonumber \\
&\leq \hat{C}_8 \int^{t'}_0 |F_3(\tau)| \left| \frac{v(\tau) - v(\tau - \Delta t)}{\Delta t} \right|_H d\tau \;\; {\rm for} \; 0 \leq t' \leq T, \Delta t > 0.
\end{align}
Here, by applying the Lebesgue density point theorem, there exists a measurable set $E \subset [0,T]$ satisfying ${\rm mes}(E) = 0$ and
\begin{align}\label{laslem-eq11}
\lim_{\Delta t \to 0} \frac{1}{\Delta t} \int^t_{t - \Delta t} |v_x(\tau)|_H^2 d\tau = |v_x(t)|_H^2 \;\; {\rm for} \; t \in [0, T] \backslash E,
\end{align}
where ${\rm mes}(E)$ indicates the Lebesgue measure of $E$ in $\mathbb{R}$.

Thanks to \eqref{laslem-eq9} and \eqref{laslem-eq11}, by letting $\Delta t \to 0$ in \eqref{laslem-eq10} we get
\begin{align}\label{laslem-eq12}
\frac{1}{2} |v_x(t')|_H^2 - \frac{1}{2}|v_x(0)|_H^2
\leq \hat{C}_8 \int^{t'}_0 |F_3| |v_t|_H dt \;\; {\rm for} \; t' \in [0, T] \backslash E.
\end{align}
Now, we show that \eqref{laslem-eq12} hold for any $t' \in [0, T]$. Indeed, for any $t' \in [0, T]$ we can take a sequence $\{ t_n \} \subset [0, T] \backslash E$ such that $t_n \to t$ in $[0, T]$ as $n \to \infty$. Since $v \in W^{1,2}(0,T; H) \subset C([0,T]; H)$, \eqref{laslem-eq12} guarantees that the sequence $\{ v(t_n) \}$ is bounded in $X$. Hence, on account of $v \in C([0,T]; H)$, we obtain $v(t_n) \to v(t)$ weakly in $X$ as $n \to \infty$, namely, \eqref{laslem-eq12} holds for any $t \in [0, T]$.

For any $s \in [0, T]$, by regarding $s$ as the initial time, we can show the following inequality in a similar way:
\begin{align}\label{laslem-eq13}
\frac{1}{2} |v_x(t)|_H^2 - \frac{1}{2}|v_x(s)|_H^2 
\leq \int^t_s |F_3| |v_t|_H d\tau \;\; {\rm for} \; 0 \leq s < t \leq T.
\end{align}
This implies that the function $t \;\mapsto\; \frac{1}{2} |v_x(t)|_H^2$ is differentiable for a.e. $t \in [0, T]$ and it holds that
\[
\frac{1}{2} |v_x(t)|_H^2 - \frac{1}{2}|v_x(s)|_H^2
\leq \int^t_s \frac{d}{d\tau} \left( \frac{1}{2} |v_x(\tau)|^2 \right) d\tau \;\; {\rm for} \; 0 \leq s < t \leq T.
\]
Thus, we have proved \eqref{laslem-eq1}.

From now on we shall prove \eqref{diff-eq2}. By \eqref{laslem-eq2} and \eqref{laslem-eq7}, we have
\begin{align}\label{laslem-eq14}
&\int^1_0 \psi'(u_1(t))u_t(t)\frac{v(t) - v(t - \Delta t)}{\Delta t} dx
+ \frac{1}{\Delta t} \left( \frac{1}{2} |v_x(t)|_H^2 - \frac{1}{2} |v_x(t - \Delta t)|_H^2 \right) \nonumber\\
&\leq \int^1_0 \lambda'(\tilde{u}_1(t)) \tilde{u}_x(t) p_x(t) \frac{v(t) - v(t - \Delta t)}{\Delta t} dx \nonumber \\
&\quad + \int^1_0 (\lambda'(\tilde{u}_1(t)) - \lambda'(\tilde{u}_2(t)))\tilde{u}_{2x}(t) p_x(t) \frac{v(t) - v(t - \Delta t)}{\Delta t} dx \nonumber \\
&\quad + \int^1_0 (\lambda(\tilde{u}_1(t)) - \lambda(\tilde{u}_2(t)))p_{xx}(t) \frac{v(t) - v(t - \Delta t)}{\Delta t} dx \nonumber \\
&\quad - \int^1_0 (\psi'(u_1(t)) - \psi'(u_2(t)))u_{2t}(t) \frac{v(t) - v(t - \Delta t)}{\Delta t} dx \;\; {\rm for \; a.e.} \; t \in [0, T] \; {\rm and} \; \Delta t > 0.
\end{align}
Since the function $t \;\mapsto\; \frac{1}{2}|v_x(t)|_H^2$ and $v$ are differentiable a.e. on $[0, T]$, by letting $\Delta t \to 0$ in \eqref{laslem-eq14} we obtain:
\begin{align*}
&\int^1_0 \psi'(u_1) u_t v_t dx
+ \frac{d}{dt} \left( \frac{1}{2} |v_x|_H^2 \right)\\
&\leq \int^1_0 \lambda'(\tilde{u}_1) \tilde{u}_x p_x v_t dx
+ \int^1_0 (\lambda'(\tilde{u}_1) - \lambda'(\tilde{u}_2)) \tilde{u}_{2x} p_x v_t dx\\
&\quad + \int^1_0 (\lambda(\tilde{u}_1) - \lambda(\tilde{u}_2)) p_{xx} v_t dx
- \int^1_0 (\psi'(u_1) - \psi'(u_2))u_{2t} v_t dx \;\; {\rm for \; a.e.} \; t \in [0, T].
\end{align*}
Here, we note that $\frac{1}{C_\lambda} \leq (\hat{\lambda}^{-1}(r))' \leq \frac{1}{\delta_\lambda}$ for any $r \in \mathbb{R}$,  
\begin{align*}
u_t v_t
&= (\hat{\lambda}^{-1}(v_1) - \hat{\lambda}^{-1}(v_2))_t v_t\\
&= ((\hat{\lambda}^{-1})'(v_1) v_{1t} - (\hat{\lambda}^{-1})'(v_2) v_{2t}) v_t\\
&\geq \frac{1}{C_\lambda} |v_t|^2 - |((\hat{\lambda}^{-1})'(v_1) - (\hat{\lambda}^{-1})'(v_2)) v_{2t}| |v_t|\\
&\geq \frac{1}{C_\lambda} |v_t|^2 - \frac{C_\lambda^2}{\delta_\lambda^2} |v| |v_{2t}| |v_t|,
\end{align*}
and
\begin{align*}
|u|
&\leq |\hat{\lambda}^{-1}(v_1) - \hat{\lambda}^{-1}(v_2)|\\
&\leq \frac{1}{\delta_\lambda} |v| \;\; {\rm a.e. \; on} \; Q(T).
\end{align*}
Similarly to \eqref{laslem-eq3} - \eqref{laslem-eq6} from the above inequalities it follows that
\begin{align*}
\frac{\delta_\psi}{C_\lambda} |v_t|_H^2 + \frac{d}{dt} \left( \frac{1}{2} |v_x|_H^2 \right)
&\leq \frac{C_\lambda^2}{\delta_\lambda^2} |v|_{L^\infty(0, 1)} |v_{2t}|_H |v_t|_H
+ C_\lambda |p_x|_{L^\infty(Q(T))} |\tilde{u}_x|_H |v_t|_H\\
&\quad + C_\lambda |\tilde{u}|_{L^\infty(0, 1)} |p_x|_{L^\infty(Q(T))} |\tilde{u}_{xx}|_H |v_t|_H
+ C_\lambda |\tilde{u}|_{L^\infty(0, 1)} |p_{xx}|_H |v_t|_H\\
&\quad + \frac{C_\psi}{\delta_\lambda} |v|_{L^\infty(0, 1)} |u_{2t}|_H |v_t|_H \;\; {\rm a.e. \; on} \; [0, T].
\end{align*}
Thanks to Young's inequality and Lemma \ref{abs_ineq}, there exists a positive constant $\hat{C}_9$ such that
\begin{align}\label{laslem-eq15}
\frac{\delta_\psi}{2 C_\lambda} |v_t|_H^2
+ \frac{d}{dt} \left( \frac{1}{2} |v_x|_H^2 \right)
\leq \hat{C}_9 (|v|_H^2 + |v_x|_H^2) (|v_{2t}|_H^2 + |u_{2t}|_H^2)
+ \hat{C}_9 |\tilde{u}|_X^2 \;\; {\rm for \; a.e.} \; t \in [0, T].
\end{align}
Note that by \eqref{laslem-eq13} the function $t \; \mapsto \; \frac{1}{2} e^{-\int^t_0 F_4 d\tau}|v_x(t)|_H^2$ is differentiable a.e. on $[0, T]$ and
\[
\frac{1}{2} e^{-\int^t_0 F_4 d\tau}|v_x(t)|_H^2
-\frac{1}{2} e^{-\int^s_0 F_4 d\tau}|v_x(s)|_H^2
\leq \frac{1}{2} \int^t_s \frac{d}{dt} \left(e^{-\int^t_0 F_4 d\tau}|v_x(\tau)|_H^2 \right) d\tau \;\; {\rm for} \; 0 \leq s \leq t \leq T,
\] 
where $F_4 = \hat{C}_9 (|v_{2t}|_H^2 + |u_{2t}|_H^2)$ on $[0, T]$.

Therefore, we can apply Gronwall's inequality to \eqref{laslem-eq15} and have obtained
\begin{align}\label{laslem-eq16}
\frac{\delta_\psi}{2C_\lambda} \int^t_0 |v_t|_H^2 d\tau 
+ \frac{1}{2} |v_x(t)|_H^2
\leq \hat{C}_9 e^{\int^t_0 F_4 d\tau}\left(\int^t_0 F_4 |u|_H^2 d\tau + \int^t_0 |\tilde{u}|_X^2 d\tau \right) \;\; {\rm for} \; 0 \leq t \leq T.
\end{align}
Moreover, we see that
\begin{align}\label{laslem-eq17}
\int^t_0 F_4 |u|_H^2 d\tau
\leq \hat{C}_{10} \left(\int^t_0 |\tilde{u}|_H^2 d\tau \right) \left(\int^t_0 F_4 d\tau \right) \;\; {\rm for} \;\; 0 \leq t \leq T',
\end{align}
where $\hat{C}_{10}$ is a positive constant depending on $C_5$ given in Lemma \ref{lem45}.

Due to \eqref{laslem-eq16} and \eqref{laslem-eq17}, there exists a positive constant $\hat{C}_{11}$ such that
\begin{align*}
|(v_1 - v_2)_x(t')|_H^2 \leq \hat{C}_{11} \int^{t'}_{0} |\tilde{u}_1 - \tilde{u}_2|^2_Xdt \;\; {\rm for \; any } \; t' \in [0, T'].
\end{align*}
Here, by Lemma \ref{lem43}, $|(u_1 - u_2)_x(t')|_H^2$ is estimated as follows: 
\begin{align*}
|(u_1 - u_2)_x(t')|_H^2
&\leq |(v_1 - v_2)_x(t')|_H^2 + \frac{12C_\lambda^2}{\delta_\lambda^6} |(v_1 - v_2)(t')|^2_X |v_{2x}|^2_H\\
&\leq \left( \hat{C}_{11} + \frac{12C_\lambda^4C_2(1 + MT')}{\delta_\lambda^6} \right) \int^{t'}_{0} |\tilde{u}_1 - \tilde{u}_2|^2_Xdt\\
&=: C_6 \int^{t'}_{0} |\tilde{u}_1 - \tilde{u}_2|^2_Xdt \;\; {\rm for \; any } \; t' \in [0, T'].
\end{align*}
Thus, we have proved this lemma.
\end{proof}

\begin{proof}[Proof of Theorem \ref{P-SS}]
For $T_1 \in [0, T']$ by Lemmas \ref{lem45} and \ref{lem46}, we have 
\begin{align*}
\int^{T_1}_0 |(u_1 - u_2)(t)|_X^2 dt
&\leq (C_5 + C_6)T_1 \int^{T_1}_0 |(\tilde{u}_1 - \tilde{u}_2)(t)|^2_X  dt.
\end{align*}

Thus, we obtain
\begin{align*}
|u_1 - u_2|_{L^2(0, T_1; X)}
 &\leq \sqrt{(C_5 + C_6)T_1}|\tilde{u}_1 - \tilde{u}_2|_{L^2(0, T_1; X)}.
\end{align*}

Here we choose $T_1 \in (0, T']$ such that $\sqrt{(C_5 + C_6)T_1} < 1$. Accordingly, for any $\tilde{u}_i \in K(T_1, M), i = 1, 2$, let $u_1 = \Gamma_{T_1}(\tilde{u}_1), u_2 = \Gamma_{T_1}(\tilde{u}_2)$ and put $\mu = \sqrt{(C_5 + C_6)T_1}$. Then, it holds that
\[
|u_1 - u_2|_{L^2(0, T_1; W^{1,2}(0, 1))}
 \leq \mu|\tilde{u}_1 - \tilde{u}_2|_{L^2(0, T_1; W^{1,2}(0, 1))}.
 \]
 
Hence, the Banach fixed point theorem implies that there exists one and only one $u \in K(T_1, M)$ satisfying $\Gamma_{T_1}(u) = u$. Therefore, (P) has a unique solution $u$ on $[0, T_1]$.\\

Finally, we show that (P) has a solution globally in time. First, we note that by Lemma \ref{lem42}, \ref{lem43}, we can estimate $|u(T_1)|_X$ only by $T$. Accordingly, we can solve (P) on $[T_1, T_2]$ by regarding $u(T_1)$ as the initial function. This shows that (P) has a solution on $[0, T_2]$. Since $T_2 - T_1$ depends only on $T$, by repeating this argument finite times, we can obtain the solution of (P) on $[0, T]$. Thus, Theorem \ref{P-SS} is proved, completely.
\end{proof}

\begin{center}
{\bf Acknowledgement }
\end{center}
 
The authors are grateful to members of Ebara corporation for showing several interesting phenomena related to moisture penetration with fruitful discussion which triggered this work.

%%%%%
\bibliographystyle{plain}

\begin{thebibliography}{1}
\bibitem{Fukui}
K. Fukui, C. Iba, S. Hokoi, D. Ogura Effect of air pressure on moisture transfer inside porous building materials, J. Environ. Eng., AIJ, 83(2018), 39--47. 
  
\bibitem{Green}
D. W.  Green, H. Dabiri, C. F. Weinaug, R. Prill, Numerical modeling of unsaturated groundwater flow and comparison of the model to a field experiment, {\it Water Resources Research}, 6(1970), 862--874.

\bibitem{A-L}
H. W. Alt, S. Luckhaus, Quasilinear elliptic-parabolic differential equations, Math. Z., 183(1983), 311-341.

\bibitem{K-P}
N. Kenmochi, I. Pawlow, A class of nonlinear elliptic-parabolic equations with time dependent constrains, Nonlinear Anal. TMA., 10(1986), 1181-1202.

\bibitem{Uchida}
S. Uchida, Solvability of doubly nonlinear parabolic equation with p-Laplacian, Evol. Equ. Control Theory, 11(2022), 975-1000.

\bibitem{Kenmochi} 
N. {K}enmochi,
         {S}olvability of nonlinear evolution equations with
                   time-dependent constraints and applications,
         {B}ull. {F}ac. {E}ducation, {C}hiba {U}niv., 30(1981), 1--87. 
\end{thebibliography}

\end{document}